\documentclass[11pt]{article}
\usepackage{lmodern}
\usepackage{amsmath}
\usepackage{amsthm}
\usepackage{amssymb}
\usepackage{amsfonts}
\usepackage{enumitem}
\usepackage{fullpage}
\usepackage{hyperref}
\usepackage{tikz}
\usepackage{picins}
\usepackage{subcaption}
\usepackage{comment}

\theoremstyle{plain}
\newtheorem{theorem}{Theorem}

\newtheorem{lemma}[theorem]{Lemma}
\newtheorem{proposition}[theorem]{Proposition}
\newtheorem{conjecture}[theorem]{Conjecture}
\newtheorem{assumption}[theorem]{Assumption}
\theoremstyle{definition}
\newtheorem{definition}[theorem]{Definition}
\newtheorem{example}[theorem]{Example}
\newtheorem{remark}[theorem]{Remark}
\newtheorem{problem}[theorem]{Problem}
\newtheorem{notation}[theorem]{Notation}

\newcommand{\bC}{\mathbb{C}}

\allowdisplaybreaks
\graphicspath{{fig/}}

\begin{document}

\title{Geometry of Newton homotopies: bivariate case}
\author{Jennifer Buettner$^{1}$\thanks{%
Supported in part by NSF CCF-2331401 and CCF-2212461.} \and Jonathan D.
Hauenstein$^{2}$\thanks{%
Supported in part by NSF CCF-2331440, Simons Foundation SFM-00005696, and
the Robert and Sara Lumpkins Collegiate Professorship.} \and Caroline Hills$%
^{2}$\thanks{%
Supported in part by NSF CCF-2331440 and the Robert and Sara Lumpkins
Collegiate Professorship.} \and Hoon Hong$^{1}$\thanks{%
Supported in part by NSF CCF-2331401 and CCF-2212461.} \and Francisco Ponce
Carrion$^{1}$\thanks{%
Supported in part by NSF CCF-2212461.} \and Emma L. Schmidt$^{2}$\thanks{%
Supported in part by NSF CCF-2331440 and the Arthur J. Schmitt Presidential
Leadership Fellowship.}}
\date{$^{1}$Department of Mathematics, North Carolina State University,
Raleigh, NC 27695, \texttt{\{jdbuettn,hong,fmponcec\}@ncsu.edu} \\
\smallskip$^{2}$Department of Applied and Computational Mathematics and
Statistics, University of Notre Dame, Notre Dame, IN 46556, \texttt{%
\{hauenstein,eschmi23\}@nd.edu, chills1@alumni.nd.edu}}
\maketitle

\begin{abstract}
\noindent A standard question in computational real algebraic geometry is to compute all
real solutions to a system of polynomial equations with real coefficients.
One classical and promising approach is to track along a connected component
of a real curve defined by a Newton homotopy, which is dependent upon the
selected start point. As the start point varies, different subsets of real
solutions may be obtained. This yields a partition of the space of start
points into cells, and it is important to understand the structure of this
partition in order to develop efficient algorithms based on Newton
homotopies. The structure of the boundary of such cells and the number of
cells in the corresponding partition are investigated for bivariate systems.
Several examples are included to demonstrate the results. \newline

\noindent Keywords: Newton homotopies, real solutions, bivariate systems,
homotopy continuation, real numerical algebraic geometry
\end{abstract}

\section{Introduction}

\label{sec:intro}

Given polynomials $f_1,\dots,f_n\in\mathbb{R}[x_1,\dots,x_n]$, a standard
question in computational real algebraic geometry is to find the solution set in $%
\mathbb{R}^n$ to $f_1 = \cdots = f_n = 0$, namely 
\begin{equation*}
V_\mathbb{R}(f_1,\dots,f_n) = \{x\in\mathbb{R}^n~|~f_1(x) = \cdots = f_n(x)
= 0\}.
\end{equation*}
As usual, we will assume that the corresponding solution set in $%
\mathbb{C}^n$, namely 
\begin{equation*}
V_\mathbb{C}(f_1,\dots,f_n) = \{x\in\mathbb{C}^n~|~f_1(x) = \cdots = f_n(x)
= 0\},
\end{equation*}
is finite, i.e., $V_\mathbb{C}(f_1,\dots,f_n)$ is zero dimensional. 

One obvious approach to find all the real solutions could be to find all of the complex
solutions and then sort through to select the real solutions. The
downside is that the number of real solutions may be small compared with the
number of complex solutions. For example, 
counting multiplicity in the univariate case, the fundamental theorem of algebra posits that the number of
complex solutions is the degree while Descartes' rule of signs can be used
to bound the number of real solutions based on the number of terms.

A classical (e.g., already commonly used in 1968, see \cite{Meyer1968}) and
promising approach for computing only the real solutions is via Newton
homotopies. Given a start point $r\in\mathbb{R}^{n}$ with $f(r)\neq0$, the corresponding
Newton homotopy is 
\begin{equation*}
H(x,r;t) = f(x) - t\cdot f(r) = 0. 
\end{equation*}
Some applications of Newton homtopies include global optimization~\cite%
{TrajectoryGlobalOptimization}, potential energy landscapes~\cite%
{SamplingNewtonHomotopy,ThomsonProblemNewtonHom,MechanochemicalNewton,BifurcationNewtonHomotopy}%
, and decomposing positive-dimensional solution sets using monodromy~\cite%
{UsingMonodromy}. The simple structure of Newton homotopies is conducive to
efficient certified path trackers~\cite%
{CertificationNewtonHomotopy,CertifiedTrackingNewtonHomotopy}. Moreover, the
geometry of Newton homotopies for bivariate quadratics was studied in \cite%
{Newton22case}.

When $n=1$, the Newton homotopy is very special. For a
given~$r\in\mathbb{R}$ with~$f(r)\neq0$, the real solution set of the Newton
homotopy $H$ corresponds with the graph of~$f$ over~$\mathbb{R}$, which is
connected. Since the solutions of $f=0$ are precisely where $t=0$, one can
start from~$r$ at~$t=1$ and track along the graph of $f$ as a curve in $%
(x,t)\in\mathbb{R}^{2}$ to obtain every real solution~of~$f=0$.

When $n\geq2$, the real solution set of the Newton homotopy need not be
connected. Let $Z_{r}$ denote the set of real roots of $f$ on the same
connected component as $r$. This naturally induces a partition on the space
of start points into cells, a path connected region where $Z_{r} =
Z_{r^{\prime}}$ when~$r$ and $r^{\prime}$ are in the same cell. Therefore,
in the study of Newton homotopies, it is imperative to understand the
structure of such a partition for $n\geq 2$.

One naturally should begin with $n = 2$ in the hope of gaining insights to tackle larger $n$ and
eventually arbitrary $n$. 
In this paper, we focus on $n = 2$ since this is already nontrivial
and interesting.  
Our main contributions are an
investigation of the boundaries of the cells 
(Theorems~\ref{thm:geocharacterization},~\ref{thm:DefiningPolynomialsBdry},~and~\ref{thm:tangentcone}) and a bound on the number of cells in the
partition (Theorems~\ref{thm:num_comp_equal}~and~\ref{thm:num_comp}). Although we tackle fundamental structural questions related to Newton homotopies here,
we hope this foundational study will guide and inspire development of
efficient algorithms for computing the real~solutions.

The rest of the paper is structured as follows. Section~\ref{sec:problem}
precisely states the problems under consideration in this paper while Section~%
\ref{sec:Assumptions} summarizes the genericity assumptions used throughout.
Section~\ref{sec:geocharacterization} looks at the geometry of the boundary.
Section~\ref{sec:singPaths} algebraically describes the extended boundary of
the cells which arise from singularities. The tangent cone at these
singularities are investigated in Sections~\ref{sec:TangentCone}. The number
of singularities and the number of cells are considered in Section~\ref%
{sec:NumberConnected}, along with several examples. A short conclusion is
provided in Section~\ref{sec:Conclusion}.

\section{Problems}

\label{sec:problem} In this section, we will identify and precisely state
some fundamental problems that we will tackle in this paper. We first recall
or introduce various notions and illustrate them with the following
running~example that will use throughout.

\newpage 

\begin{example}[Running Example]
\label{runningExample}\ \pichskip{0pt} 
\parpic[r][t]{
\begin{minipage}{0.27\linewidth}
\includegraphics[width=\linewidth]{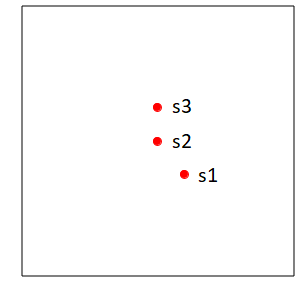}
\end{minipage}
} \noindent The following system $f$ will be used as a running
illustrative example: 
\begin{equation*}
f = 
\begin{bmatrix}
x_{1}^{3}-x_{2}^{2}-x_{1} x_{2}+1 \\ 
x_{2}^{3}-x_{1}^{2}+x_{1} x_{2}+1%
\end{bmatrix}%
. 
\end{equation*}
It is known that $V_\mathbb{C}(f)$ consists of $9$ points while $V_\mathbb{R}(f)
= \{s_1,s_2,s_3\}$ where
\begin{center}
\begin{tabular}{ccc}
$s_{1}=(0,-1),$ & $s_{2}=(-1,0),$ & $s_{3}=(-1,1)$%
\end{tabular}
\end{center}
as shown in the figure on the right.
\end{example}

\bigskip

\begin{definition}[Newton homotopy]
\label{def:NewtonHomotopy}The \emph{Newton homotopy} $H$ of $f\in\mathbb{R}%
\left[ x_{1},\ldots,x_{n}\right] ^{n}$ with \mbox{$r\in \mathbb{R}^{n}$} is defined
by%
\begin{equation*}
H\left( x,r;t\right) =f\left( x\right) -t\ f\left( r\right) . 
\end{equation*}
\end{definition}

\bigskip

\begin{example}[Running Example continued]
\label{ex:f} For the running example (Example~\ref{runningExample}), the
Newton homotopy $H$ of $f$ is 
\begin{equation*}
H(x,r;t) = 
\begin{bmatrix}
x_{1}^{3}-x_{2}^{2}-x_{1} x_{2}+1 - t( r_{1}^{3}-r_{2}^{2}-r_{1} r_{2}+1) \\ 
x_{2}^{3}-x_{1}^{2}+x_{1} x_{2}+1 - t(r_{2}^{3}-r_{1}^{2}+r_{1} r_{2}+1)%
\end{bmatrix}
. 
\end{equation*}
\end{example}

\bigskip

\begin{definition}[Newton homotopy curve]
\label{def:NewtonHomotopyCurve} For $f\in\mathbb{R}\left[ x_{1},\ldots,x_{n}%
\right] ^{n}$ and $r\in \mathbb{R}^n\setminus V_\mathbb{R}(f)$, the \emph{%
Newton homotopy curve} $C_{r}$ of $f$ and $r$ is defined~by%
\begin{equation*}
C_{r}=\left\{ x\in\mathbb{R}^{n}: \ \underset{t\in\mathbb{R}}{\exists}(x,t)
\in V_{r} \right\} \hbox{~~where~~} V_{r} = \left\{ (x,t) \in\mathbb{R}%
^{n+1}: \ H\left( x,r;t\right) =0\right\}. 
\end{equation*}
\end{definition}

\bigskip

\begin{example}[Running Example continued]
For the running example (Example~\ref{runningExample}),
consider the Newton homotopy with
\mbox{$r = (1.47,2)$}.
The figure on the left depicts~$V_{r}\subset\mathbb{R}^3$. By projecting $%
V_{r}$ onto $x$-space, we obtain $C_{r}\subset\mathbb{R}^2$ as shown in the
figure below.

\begin{center}
\begin{tabular}{ccc}
\includegraphics[scale=0.60]{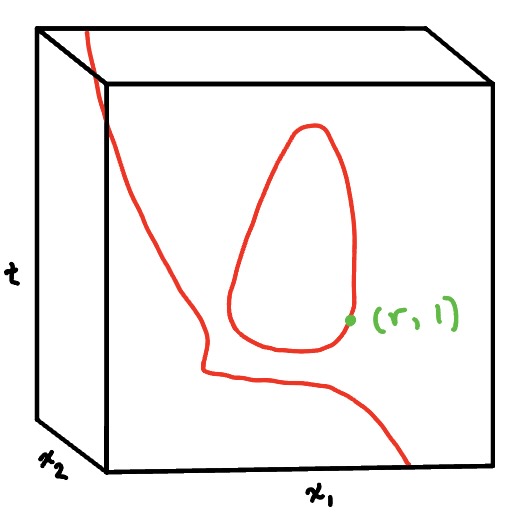} & 
\hspace{1cm} & %
\includegraphics[scale=0.60]{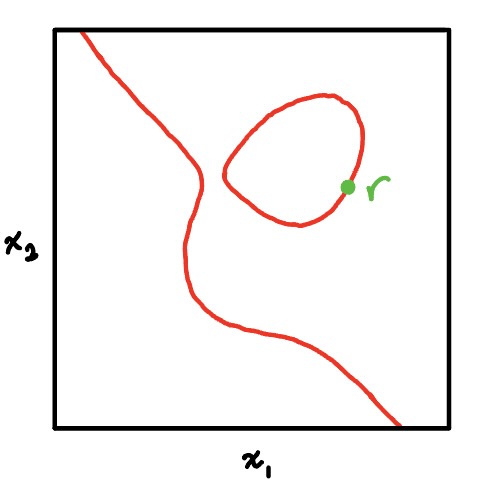} \\ 
$V_{r}$ where $r=(1.47,2)$ & \hspace{1cm} & $C_{r}$ where $r=(1.47,2)$%
\end{tabular}
\end{center}
\end{example}

\newpage
\begin{definition}[Newton homotopy start component]
\label{def:NewtonHomotopyStartCurve}The \emph{Newton homotopy start component%
} $\zeta_{r}$ of~$f$ with $r$ is the maximally connected component of $C_{r}$
that contains $r$.
\end{definition}
\begin{example}[Running Example continued]
For the running example (Example~\ref{runningExample}), the figure on the
left depicts $\zeta_r$ for $r=(1.47,2)$ while the figure on the right
depicts $\zeta_r$ for $r=(-0.843,2.25)$.

\begin{center}
\begin{tabular}{ccc}
\includegraphics[width=0.16%
\linewidth]{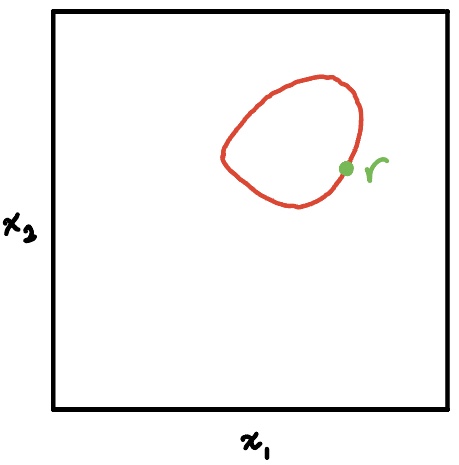} & \hspace{1cm} & %
\includegraphics[width=0.16%
\linewidth]{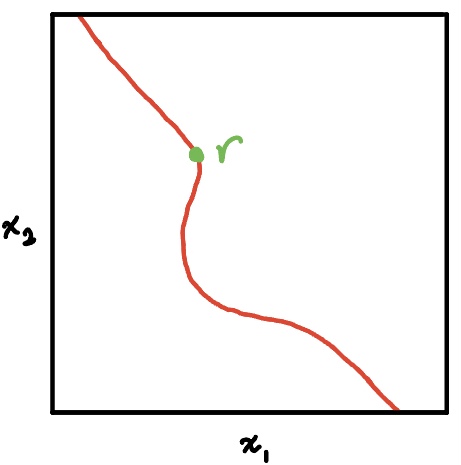} \\ 
$r = (1.47,2)$ & \hspace{1cm} & $r = (-0.843,2.25)$%
\end{tabular}
\end{center}
\end{example}

Clearly, $V_\mathbb{R}(f)\subset C_r$. As summarized in the Introduction, in
the univariate case, $C_r = \zeta_r$ for every $r\in\mathbb{R}\setminus V_%
\mathbb{R}(f)$ so that one obtains every point in $V_\mathbb{R}(f)$ by
simply tracking along $C_r=\zeta_r$. Since this does not necessarily hold
when $n\geq2$, we note that the set $V_\mathbb{R}(f)\cap \zeta_r$ is the set
of points in $V_\mathbb{R}(f)$ that are obtained by tracking along $\zeta_r$
starting from $r$.

\medskip

For a subset of $V_\mathbb{R}(f)$, the following describes all of the points 
$r$ for which the start component~$\zeta_r$ contains exactly that subset of
real solutions.

\begin{definition}[Class]
\label{def:class} For $\upsilon\subset V_{\mathbb{R}}(f),$ the \emph{class}
of $\upsilon$, denoted as ${Cl}_{\upsilon}$, is defined as 
\begin{equation*}
{Cl}_{\upsilon}\;\; =\;\; \{r \in\mathbb{R}^{n} \setminus V_{\mathbb{R}}(f)
\;:\; V_{\mathbb{R}}(f) \cap \zeta_{r} = \upsilon\}. 
\end{equation*}
\end{definition}

\begin{example}[Running Example continued]
\ The following illustrates the various classes for the running example
(Example~\ref{runningExample}) along with a start component from each of the
four nonempty classes which are colored as follows:

\noindent 
\begin{center}
\begin{tabular}{|c|c|c|c|c|c|c|c|c|}
\hline
$v$ & \{\} & \{$s_{1}$\} & \{$s_{2}$\} & \{$s_{3}$\} & \{$s_{1},s_{2}$\} & \{%
$s_{1},s_{3}$\} & \{$s_{2},s_{3}$\} & \{$s_{1},s_{2},s_{3}$\} \\ \hline
$Cl_v$ & white & yellow & \{\} & \{\} & \{\} & \{\} & pink & cyan \\ \hline
\end{tabular}

\noindent 
\begin{tabular}{cc|ccccc}
\includegraphics[width=0.16\linewidth]{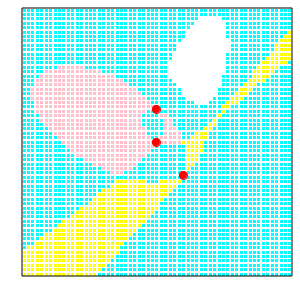} &  &  & %
\includegraphics[width=0.16\linewidth]{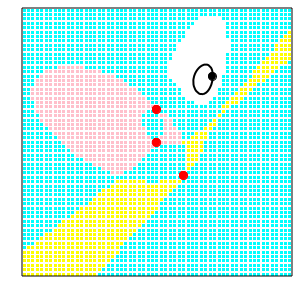} & %
\includegraphics[width=0.16\linewidth]{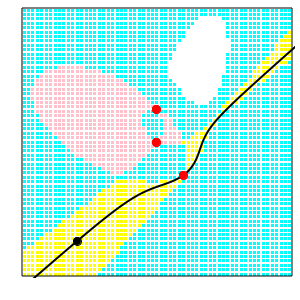} & %
\includegraphics[width=0.16\linewidth]{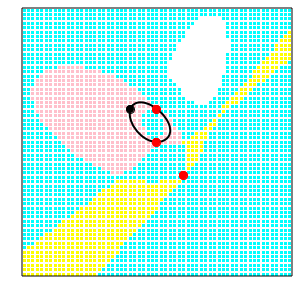} & %
\includegraphics[width=0.16\linewidth]{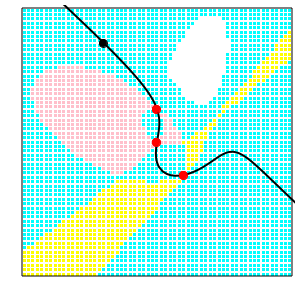}%
\end{tabular}
\end{center}
\end{example}

One key object of study is the boundary of the classes, which is defined
next.

\begin{definition}[Boundary]
\label{def:boundary} The \emph{boundary} of the classes, denoted as $B$, is
defined by 
\pichskip{0pt} 
\parpic[r][t]{
\includegraphics[width=0.17\linewidth]{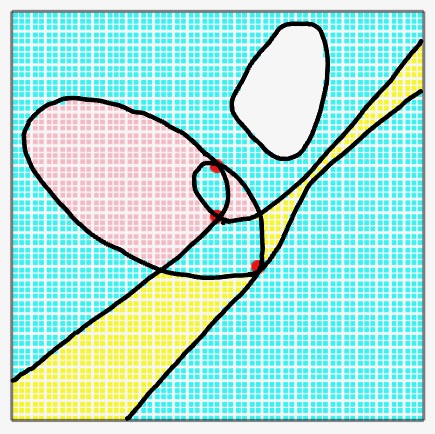}
}

\begin{equation*}
B = \underset{\upsilon\subset V_{\mathbb{R}}(f)}{\bigcup} \partial {Cl}%
_{\upsilon} \setminus V_{\mathbb{R}}(f) 
\end{equation*}
\end{definition}
\begin{example}[Running Example continued]
The boundary $B$ is shown in \\ black for the the running example (Example~\ref%
{runningExample}) in the figure on the right.
\end{example}

\bigskip

In Definition~\ref{def:boundary}, the boundary $B$ is characterized using
point-set topology. Since the underlying Newton homotopy curves are defined
in terms of polynomial equations, we aim to characterize the boundary~$B$
using algebraic geometry. For this, as usual, we will characterize $%
\overline{B}$, the Zariski closure~of~$B$.

\begin{definition}[Extended Boundary]
\label{def:extended boundary} The \emph{extended boundary}, denoted as $%
\overline{B}$, is defined as 
\begin{equation*}
\overline{B} = (\text{Zariski Closure of }B) \cap\mathbb{R}^{n} 
\end{equation*}
Equivalently, it can be defined as 
\begin{equation*}
\overline{B} = \text{Real Zariski Closure of } B 
\end{equation*}
where Real Zariski Closure of $B$ stands for the intersection of all real
algebraic supersets of $B$.
\end{definition}

\begin{example}[Running Example continued]\label{ex:BoundaryPicture}
For the running example (Example~\ref{runningExample}), the extended
boundary is given by the blue curve in the figure below.

\begin{center}
\includegraphics[width=0.25\linewidth]{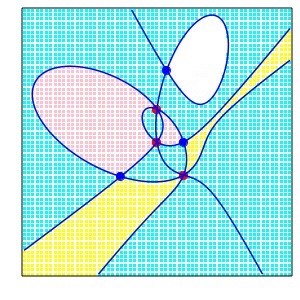}
\end{center}
\end{example}

The extended boundary $\overline{B}$ is an algebraic object whose complement
consists of connected components, which are objects of interest.

\begin{definition}[Cells]
The \emph{cells} are the maximally connected components of $\mathbb{R}^{n}
\setminus\overline{B}.$
\end{definition}

\begin{example}[Running Example continued]
For the running example (Example~\ref{runningExample}), the figure below
labels the $13$ cells obtained by removing the blue curve, the extended
boundary, from the real plane.

\begin{center}
\includegraphics[width=0.25\linewidth]{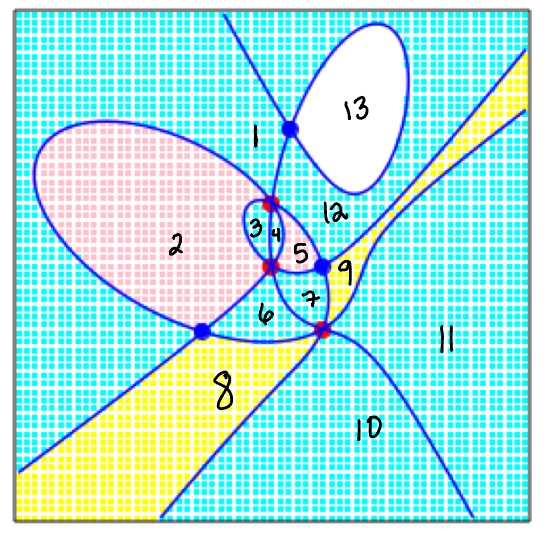}
\end{center}

\noindent Since $B\subset \overline{B}$, each connected component of $\mathbb{R}%
^n\setminus\overline{B}$ is contained in a connected component of $\mathbb{R}%
^n\setminus B$. Moreover, each maximally connected component of $\mathbb{R}%
^n\setminus B$ arises as the union of cells along with parts of $\overline{B}%
\setminus B$. For example, cells $10$ and~$11$ together with their common
boundary would form a maximally connected component of $\mathbb{R}%
^{2}\setminus B$. Similar statements hold for the pairs $1$ and~$12$, $3$
and~$4$, and $6$ and~$7$. In particular, $\mathbb{R}^{2}\setminus B$ has $%
13-4=9$ maximally connected components.
\end{example}

\newpage

Using the above characterizations, we now state a few problems that we will
consider throughout the rest of the paper.

\begin{problem}
\label{problem:geocharacterization} \emph{Find a geometric characterization
of the boundary.}
\end{problem}

\begin{problem}
\label{problem:algcharacterization} \emph{Find an algebraic characterization
of the extended boundary -- that is, find polynomials that vanish on the
extended boundary.}
\end{problem}

\begin{problem}
\label{problem:localStructure} \emph{Characterize the local structure of the
extended boundary at singularities.}
\end{problem}

\begin{problem}
\label{prob:NumCells} \emph{Find a ``reasonable'' upper bound on the number
of cells.}
\end{problem}

\medskip

\noindent {\bf Focus on bivariate systems:} The above problems are either trivial or meaningless when $n=1$.
Thus, one naturally should begin with $n=2$ in the hope of gaining insights
to tackle larger $n$ and eventually arbitrary $n$. This paper will focus on $n=2$ since it is already non-trivial and interesting. 
Bivariate systems are well-studied and arise naturally in various contexts
such as computer graphics~\cite{ManochaDemmel} and Gale dual transformations
of certain fewnomial systems~\cite{FewnomialSoftware,GaleDual}. Some other
approaches for computing real solutions to bivariate systems are
Sturm-Habicht sequences~\cite{RealBivariate} and generalized eigenvalue
decomposition~\cite{BivariateSolving}. Nonetheless, the focus here is
foundational in terms of analyzing Newton homotopies 
for solving bivariate systems summarized in the problems~listed~above.

\section{Generic assumptions}
\label{sec:Assumptions}

Let $f_{1},f_{2}\in\mathbb{R}[x_{1},x_{2}]$ such that $d_{j} = \deg f_{j} > 0
$ where $d_{1} \geq d_{2}$ and $d_{1}\geq2$. Although we consider~$f_{1}$
and~$f_{2}$ with real coefficients, we assume that $f_{1}$ and~$f_{2}$
satisfy the following genericity conditions over the complex~numbers
throughout the rest of the paper.

\begin{assumption}[Genericity conditions]
\label{assumption} \ 

\begin{enumerate}
\item \label{smooth} $V_\mathbb{C}(f_{j})$ is a smooth curve of degree $d_{j}
$.

\item \label{smooth-homo} $f_{j}^{\ast}=0$ has $d_{j}$ nonsingular solutions
in the complex projective space, where $f_{j}^{\ast}$ denotes the
homogeneous part of $f_{j}$ of highest degree (which has degree $d_{j}$).

\item \label{com-int} $V_\mathbb{C}(f_{1},f_{2})$ is a complete intersection
with $d_{1}d_{2}$ nonsingular solutions in $\mathbb{C}^{2}$.

\item \label{lin-com} 
For $r\in\bC^2\setminus V_\bC(f)$ and 
$C_r^\bC = \left\{x\in\bC^n : \ \underset{t\in\mathbb{\bC}}{\exists}(x,t) \ H\left( x,r;t\right) =0\right\}$, 
the complex Newton homotopy curve:

\begin{enumerate}
\item \label{lin:deg} $C_r^\bC$ is a curve of degree $d_1$ whenever $f_2(r)\neq 0$;

\item \label{lin:inf} $C_r^\bC$ is smooth at infinity except possibly when 
$f_2(r) = 0$ and $d_{1}>d_{2}$;

\item \label{lin:sing} $C_r^\bC$ has at most one singularity 
and that singularity (if it exists) is an ordinary double point, in other
words, a quadratic singularity.
\end{enumerate}
\end{enumerate}
\end{assumption}

\noindent The above assumptions are \emph{generic} in that almost all $f\in%
\mathbb{R}[x_1,x_2]^2$ satisfy all them. Items \ref{smooth}, \ref%
{smooth-homo}, \ref{com-int}, and~\ref{lin:deg} are well known generic
properties. Item~\ref{lin:inf} follows from Items~\ref{smooth-homo} and~\ref%
{lin:deg}. The genericity of Item~\ref{lin:sing} follows from \cite[Thm.~22 \&
Cor.~27]{BranchPoints} arising from the well known Lefschetz~pencil.


\section{Geometric characterization of boundary}
\label{sec:geocharacterization}

\noindent The following tackles Problem~\ref{problem:geocharacterization} by determining a geometric characterization of the boundary.

\begin{theorem}[Geometric characterization of boundary]
\label{thm:geocharacterization} We have

\begin{description}
\item[C1:] If $d_{1}=d_{2}$, then\ \ \ $B\ \ \subset\ \ \bigcup
\limits_{r:\,C_{r}\ \text{is singular}}C_{r}.$

\item[C2:] If $d_{1}>d_{2}$, then\ \ \ $B\ \ \subset\ \ \bigcup
\limits_{r:\,C_{r}\ \text{is singular}}C_{r}\ \ \ \ \cup\ \ \ \ V_{\mathbb{R}%
}(f_{2})$.
\end{description}
\end{theorem}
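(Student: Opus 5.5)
Our plan is to prove the contrapositive: fix $r_0\in\mathbb{R}^2\setminus V_{\mathbb{R}}(f)$ such that $C_{r_0}$ is nonsingular, and, when $d_1>d_2$, also $f_2(r_0)\neq 0$. We will show that the map $r\mapsto V_{\mathbb{R}}(f)\cap\zeta_r$ is constant on a neighborhood of $r_0$. Then $r_0$ lies in the interior of a single class, so $r_0\notin\partial Cl_\upsilon$ for every $\upsilon$, and hence $r_0\notin B$.

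We begin with a reformulation. For $r$ with $f(r)\neq 0$, a point $x$ lies in $C_r$ exactly when $f(x)$ is parallel to $f(r)$, that is, when $g_r(x):=f_1(x)f_2(r)-f_2(x)f_1(r)=0$. So $C_r=V_{\mathbb{R}}(g_r)$ is the real part of a plane curve whose coefficients depend linearly on $f(r)$. Moreover, $V_{\mathbb{R}}(f)\subset C_r$ for all $r$.

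\emph{Step 1: smoothness at infinity.} By Assumption~\ref{assumption}, Items~\ref{lin:deg} and~\ref{lin:inf}, the projective closure of $C^{\mathbb{C}}_{r_0}$ has degree $d_1$ and is smooth at infinity, apart from the excluded case $f_2(r_0)=0$ with $d_1>d_2$. Our hypotheses rule out that case. When $d_1=d_2$, Item~\ref{lin:deg} still applies if $f_2(r_0)\ne0$; if $f_2(r_0)=0$, we will instead use the symmetric role of $f_1$, so both cases are covered. Combining this with the affine smoothness of $C_{r_0}$, the projective closure of $C_{r_0}$ is a smooth compact curve, both in $\mathbb{P}^2(\mathbb{C})$ and in its real points. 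Smoothness is an open condition on the coefficients. The degree stays $d_1$ near $r_0$, because the leading form $f_1^\ast f_2(r)-f_2^\ast f_1(r)$ (or $f_1^\ast f_2(r)$ when $d_1>d_2$) does not degenerate. Therefore $\overline{C_r}\subset\mathbb{RP}^2$ is smooth for all $r$ in a small neighborhood $U$ of $r_0$.

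\emph{Step 2: isotopy.} Apply Ehresmann's theorem, or simply the implicit function theorem together with compactness, to the family of smooth compact real projective curves $\overline{C_r}$, $r\in U$. This gives a continuous family of homeomorphisms $\phi_r:\overline{C_{r_0}}\to\overline{C_r}$ with $\phi_{r_0}=\mathrm{id}$, depending continuously on $r$. Two further properties follow from the same local analysis. First, each point $s\in V_{\mathbb{R}}(f)$ lies on every $C_r$, and we can choose the isotopy so that $\phi_r(s)=s$: being a nonsingular point of the family, $s$ can be fixed by adjusting the trivialization locally. Second, $\phi_r(r_0)$ lies on the same component of $\overline{C_r}$ as $r$, since $r\in C_r$ moves continuously and stays near $r_0$.

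\emph{Step 3: from projective to affine components.} The component of $\overline{C_r}$ containing $r$ contains exactly the same points of $V_{\mathbb{R}}(f)$ as the component of $\overline{C_{r_0}}$ containing $r_0$. This is not yet the affine statement, because $\zeta_r$ is a component of the affine curve $C_r$, and a projective oval may be cut into several affine arcs by the points at infinity. This is the step we expect to be the main obstacle. Our plan is to use the fact that the real points at infinity $\overline{C_r}\cap L_\infty$ are transversal intersections, because the roots of the leading form are simple (Item~\ref{smooth-homo}). Hence they move continuously and their number is locally constant. The isotopy can then be chosen to preserve $L_\infty$ as well, so that it restricts to a homeomorphism of affine parts that carries $\zeta_{r_0}$ to $\zeta_r$ and fixes $V_{\mathbb{R}}(f)$.

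It follows that $V_{\mathbb{R}}(f)\cap\zeta_r=V_{\mathbb{R}}(f)\cap\zeta_{r_0}$ for all $r\in U$, which gives both C1 and C2. The role of $V_{\mathbb{R}}(f_2)$ in C2 is precisely that this set is where the leading form of $g_r$ degenerates when $d_1>d_2$. There the degree can drop, or points at infinity can collide, so the count of affine arcs may jump.
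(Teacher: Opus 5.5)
Your strategy (prove the contrapositive by showing $Z_r$ is locally constant near a nonsingular member) is the paper's, with an isotopy in place of its Hausdorff-distance argument. It works for C2, but Step~3 fails for C1, exactly where you expected trouble.

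\textbf{Where Step 3 breaks.} Your claim is that the real points of $\overline{C_r}\cap L_\infty$ are transversal because the roots of the leading form are simple by Assumption~\ref{assumption}(\ref{smooth-homo}). This is valid only when $d_1>d_2$. In that case the leading form of $g_r$ is $f_2(r)f_1^\ast$, so the points at infinity are the fixed simple roots of $f_1^\ast$. Your isotopy fixing $V_{\mathbb{R}}(f)$ and these points then does give C2.

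When $d_1=d_2$, the leading form is $f_2(r)f_1^\ast-f_1(r)f_2^\ast$, which sweeps out the pencil of binary forms spanned by $f_1^\ast,f_2^\ast$.
\begin{itemize}
\item Assumption~\ref{assumption}(\ref{smooth-homo}) controls only the two generators, and the pencil always contains forms with a repeated root.
\item For an open set of real systems, some real $r_0$ gives a real double root. For example, this happens whenever $f_1^\ast$ and $f_2^\ast$ have different numbers of real roots, since the real root count can only change through a collision.
\item At such an $r_0$, Assumption~\ref{assumption}(\ref{lin:inf}) only says that $\overline{C_{r_0}}$ is smooth at that point, i.e.\ tangent to the line at infinity $L_\infty$ rather than singular.
\end{itemize}
The number of real points at infinity then jumps as $r$ crosses $C_{r_0}$, so no isotopy can preserve $L_\infty$.

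\textbf{Why this cannot be patched.} The affine components genuinely change; a cleverer isotopy would not help. Take cubics, and suppose the pseudo-line of $\overline{C_{r_0}}$ is tangent to $L_\infty$ at $P$ and crosses it at $Q$. Its affine part is then two arcs.
\begin{itemize}
\item On the side where the double root becomes a complex-conjugate pair, the branch near $P$ leaves $L_\infty$, and the two arcs are joined by a far-away arc.
\item On the other side, the branch crosses $L_\infty$ twice near $P$, and the two arcs stay separate.
\end{itemize}
Suppose each arc carries a point of $V_{\mathbb{R}}(f)$ and $r_0$ lies on one of them; this configuration is open and realizable. Then $Z_r$ jumps across the nonsingular curve $C_{r_0}$, so $r_0\in B$ even though $r_0$ lies on no singular $C_r$.

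What your method actually proves for $d_1=d_2$ is C1 with the finitely many real pencil members tangent to $L_\infty$ at a real point added to the right-hand side. Equivalently, it proves C1 under the extra hypothesis that no such member exists.

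\textbf{Comparison with the paper.} The paper's proof does not supply this missing ingredient either. Its Key Lemma~\ref{lem:key} rests on the same idea: corresponding components stay Hausdorff-close in the Poincar\'e-compactified metric, and then Lemma~\ref{lem:inequality} and the finiteness of $V_{\mathbb{R}}(f)$ finish the argument. However, it simply asserts a one-to-one correspondence between components of $C_p$ and $C_q$ for nonsingular, smooth-at-infinity $C_p$. That correspondence is precisely what fails under tangency to the line at infinity.
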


\medskip

\noindent Before we show the proof of  the above theorem, we will first illustrate what the  theorem is saying using two examples. 

\medskip

\begin{example}
\label{ex:generic_bound} \mbox{}\ 

\begin{enumerate}
\item For the running example (Example~\ref{runningExample}), $d_1 = d_2 = 3$%
. There are four values of $r\in\mathbb{R}^2$ such that $C_r$ is singular:
the three blue points and the green point in the figure below. 
\begin{equation*}
\includegraphics[width=0.3\linewidth]{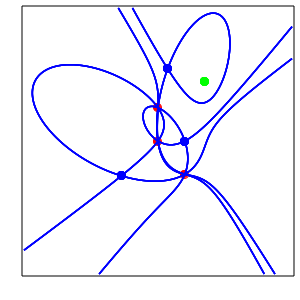} \label{fig:ex22_pt1}
\end{equation*}
The Newton homotopy curves arising from the three blue points, which are
singular points that are non-isolated over the real numbers, contain the
boundary $B$, and actually form $\overline{B}$
as in Example~\ref{ex:BoundaryPicture}. The green point is a singular
point that is isolated over the real numbers and does not contribute to $B$;
rather, it arises as the limit of a loop that contracts to a point.

\item Consider the system 
\begin{equation*}
f = \left[ 
\begin{array}{c}
x_{1}^{3} + 3x_{1}x_{2}^{2} + x_{2}^{3} + 3x_{2} - 1 \\ 
x_{1}^{2} - x_{2}^{2} + 1%
\end{array}
\right] 
\end{equation*}
where $d_{1}=3$ and $d_{2}=2$ with $V_\mathbb{R}(f)$ consisting of two red
points in the figure below. 
\begin{equation*}
\includegraphics[width=0.3\linewidth]{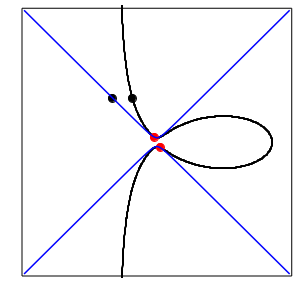} 
\end{equation*}
The Newton homotopy curves associated with the two black points are shown in
the above image. For the point $r\in V_\mathbb{R}(f_2)$, the curve $C_r$ is
a blue hyperbola (degree $2$) with 2 connected components, each containing
one point in $V_\mathbb{R}(f)$. However, for a slight perturbation, the
Newton homotopy curve is the black connected cubic curve that passes through
both points in $V_\mathbb{R}(f)$ showing that the boundary $B$ can be
contained in $V_\mathbb{R}(f_2)$ when $d_1 > d_2$.
\end{enumerate}
\end{example}

\medskip

\noindent Now we will begin to prove Theorem~\ref{thm:geocharacterization}. For this,
we need a few notations, definitions, and lemmas.

\begin{notation} \  
\begin{enumerate}
\item $Z_{r}=V_{\mathbb{R}}(f)\cap\zeta_{r}$ \hspace{4em}\textquotedblleft
the real solutions of $f$ on the start component of $C_{r}$\textquotedblright

\item $\overline{Z}_{r}=V_{\mathbb{R}}\left( f\right) \backslash\;\zeta_{r} 
\hspace{4.3em}$\textquotedblleft the real solutions of $f$ on the other
components of $C_{r}$\textquotedblright

\item $\overline{\zeta}_{r}=C_{r}\backslash\zeta_{r}$ \hspace{6.3em}%
\textquotedblleft the union of the other components\textquotedblright
\end{enumerate}
\end{notation}

\begin{example}[Running Example continued]
The previous notation is illustrated in the figure below using the running
example (Example~\ref{runningExample}). For the choice of $r$, $C_r$ has two
connected components with $\zeta_{r}$ being the one containing~$r$ and the
other being $\overline{\zeta_r} = C_r\setminus\zeta_r$. Although $V_\mathbb{R%
}(f)\subset C_r$, $Z_r = V_\mathbb{R}(f)\cap\zeta_r$ consists of two points
while $\overline{Z}_r$ consists of the other one. 
\begin{equation*}
\includegraphics[width=0.3\linewidth]{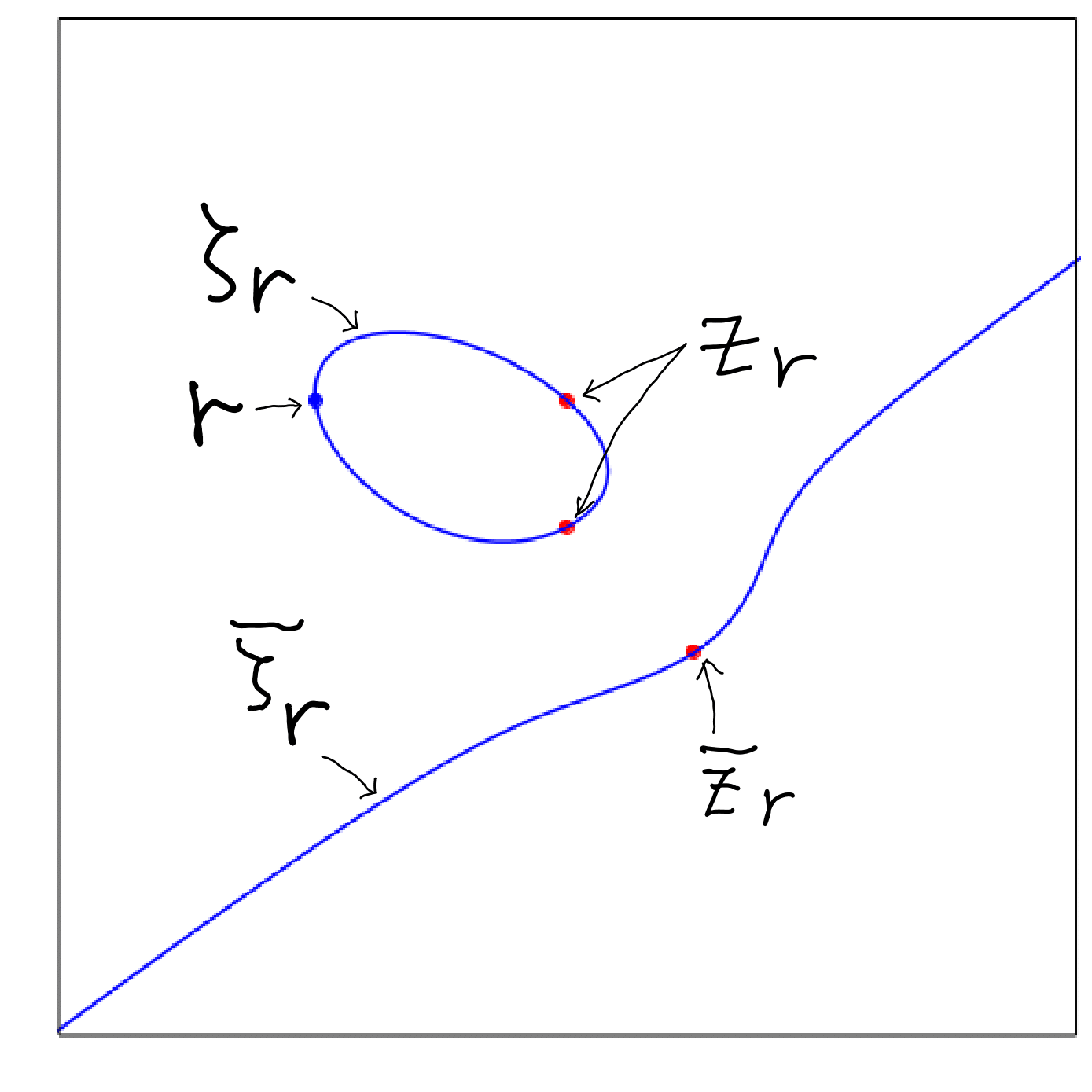} 
\end{equation*}
\end{example}

\begin{definition}[Distances]
\label{def:distances} \ \ 

\begin{itemize}
\item For $p\in\mathbb{R}^{2}$, let $\hat{p}=\frac{p }{\left\vert \left\vert
(p,1)\right\vert \right\vert _{2}}\in\mathbb{R}^{2}$. This is essentially
the Poincar\'{e} compactification.

\item For $a,b\in\mathbb{R}^{2},\ $let $d\left( a,b\right) =\left\vert
\left\vert \hat{a}-\hat{b}\right\vert \right\vert _{2}$. Lemma~\ref%
{lemma:distancemetric} shows that this is a distance function.

\item For $A,B\subset\mathbb{R}^{2}$, let

\begin{enumerate}
\item $d_{\inf}\left( A,B\right) =\inf\limits_{\left( a,b\right) \in A\times
B}d\left( a,b\right)$,

\item $d_{\sup}\left( A,B\right) =\sup\left\{ \sup\limits_{a\in
A}\inf\limits_{b\in B}d\left( a,b\right) ,\ \sup\limits_{b\in
B}\inf\limits_{a\in A}d\left( a,b\right) \right\} $ which is the Hausdorff
distance between $A$ and $B$.
\end{enumerate}

Since we are working with nonnegative numbers, we define $\sup \emptyset = 0$
and $\inf \emptyset = \infty$.
\end{itemize}
\end{definition}

\begin{lemma}
\label{lemma:distancemetric} The function $d\left( a,b\right) =\left\vert
\left\vert \hat{a}-\hat{b}\right\vert \right\vert _{2} $ is a distance
function satisfying the metric axioms.
\end{lemma}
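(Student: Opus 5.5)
The plan is to realize $d$ as the pullback of the Euclidean metric under the map $\phi:\mathbb{R}^2\to\mathbb{R}^2$, $\phi(p)=\hat p = p/\|(p,1)\|_2$. Once $\phi$ is shown to be injective, every metric axiom except positivity transfers directly from the Euclidean norm. Nonnegativity, $d(a,a)=0$, and symmetry $d(a,b)=d(b,a)$ hold because $\|\cdot\|_2$ is a norm. The triangle inequality follows from
\[
d(a,c)=\|\hat a-\hat c\|_2\le\|\hat a-\hat b\|_2+\|\hat b-\hat c\|_2=d(a,b)+d(b,c).
\]
So the only step with real content is the identity of indiscernibles: $d(a,b)=0$ implies $a=b$, i.e.\ $\phi$ is injective.

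For injectivity I would exhibit an explicit inverse. Set $\rho=\|p\|_2$ and $\lambda(p)=1/\sqrt{\rho^2+1}\in(0,1]$, so $\hat p=\lambda(p)\,p$. Then $\|\hat p\|_2=\rho/\sqrt{\rho^2+1}$, which is strictly less than $1$. This gives $\sqrt{1-\|\hat p\|_2^2}=1/\sqrt{\rho^2+1}=\lambda(p)$, and hence
\[
p=\frac{\hat p}{\sqrt{1-\|\hat p\|_2^2}} .
\]
Thus $p$ is determined by $\hat p$. Concretely, if $\hat a=\hat b$, then applying this formula to both sides gives $a=b$.

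An equivalent geometric view is that $\phi$ is a homeomorphism of $\mathbb{R}^2$ onto the open unit disk. It preserves rays from the origin, and on each ray it acts by the strictly increasing map $\rho\mapsto\rho/\sqrt{\rho^2+1}$. Either description settles positivity, which is the only step I expect needs any care; everything else is inherited from the norm.
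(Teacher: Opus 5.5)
Your proposal is correct and follows the same route as the paper: nonnegativity, symmetry, and the triangle inequality come from $\|\cdot\|_2$, and positivity reduces to injectivity of $p\mapsto\hat p$. The paper only asserts that this injectivity is ``easy to see,'' whereas your explicit inverse $p=\hat p/\sqrt{1-\|\hat p\|_2^2}$ actually proves it.
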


\begin{proof}
Let $a,b\in\mathbb{R}^2$. First, we trivially have $d(a,a) = \left\vert
\left\vert \hat{a}-\hat{a}\right\vert \right\vert _{2} = \left\vert
\left\vert 0\right\vert \right\vert _{2} = 0$. Moreover, if $a\neq b$, it is
easy to see $\hat{a}\neq \hat{b}$ so that $d(a,b) >0$. Symmetry and the
triangle inequality follow from the same properties of $\|\cdot\|_2$.
\end{proof}

Since Hausdorff distance $d_{\sup}$ is a metric, $d_{\sup}$ satisfies the
triangle inequality. Although $d_{\inf}$ need not satisfy the triangle
inequality, the following shows that there is mixed triangle inequality
involving $d_{\inf}$ and $d_{\sup}$.

\begin{lemma}[Mixed Triangle Inequality]
\label{lem:inequality} If $U,V,W\subset \mathbb{R}^{2}$ are nonempty, then 
\begin{equation*}
d_{\inf}(U,W) \ \ \leq \ \ d_{\sup}(U,V)\;+\;d_{\inf}(V,W). 
\end{equation*}
\end{lemma}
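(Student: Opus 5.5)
The plan is a pointwise epsilon argument that uses only the triangle inequality for the metric $d$ from Lemma~\ref{lemma:distancemetric} together with the definitions of $d_{\inf}$ and $d_{\sup}$. If $d_{\sup}(U,V)=\infty$, the inequality holds trivially. The term $d_{\inf}(V,W)$ is finite because $V,W$ are nonempty. Note also that $d$ is bounded: $\|\hat p\|_2<1$, so $d<2$, and both quantities are in fact always finite. Fix $\varepsilon>0$.

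\emph{Step 1.} By definition of the infimum, choose $v\in V$ and $w\in W$ with $d(v,w)\le d_{\inf}(V,W)+\varepsilon$.

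\emph{Step 2.} Since $v\in V$, the definition of the Hausdorff distance gives
\[
\inf_{u\in U} d(u,v)\;\le\;\sup_{b\in V}\inf_{a\in U} d(a,b)\;\le\; d_{\sup}(U,V).
\]
Here we use that this one-sided supremum is one of the two terms in the outer $\sup$ defining $d_{\sup}$. Since $U$ is nonempty, we can therefore choose $u\in U$ with $d(u,v)\le d_{\sup}(U,V)+\varepsilon$.

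\emph{Step 3.} By the triangle inequality for $d$,
\[
d_{\inf}(U,W)\le d(u,w)\le d(u,v)+d(v,w)\le d_{\sup}(U,V)+d_{\inf}(V,W)+2\varepsilon.
\]
Letting $\varepsilon\to 0$ yields the claim.

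There is no real obstacle. The only point needing care is Step 2, where one must use the correct one-sided term of $d_{\sup}$, namely the supremum over $b\in V$ of the distance to $U$. Nonemptiness of $U$ and $V$ is needed so that the chosen points exist and the convention $\inf\emptyset=\infty$ is never triggered.
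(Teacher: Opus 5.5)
Your proof is correct and is essentially the paper's argument: choose $v\in V$, $w\in W$ nearly attaining $d_{\inf}(V,W)$, then $u\in U$ nearly attaining $\inf_{a\in U}d(a,v)\le d_{\sup}(U,V)$, and apply the triangle inequality for $d$. The paper additionally picks a near-minimizing pair for $d_{\inf}(U,W)$ itself. Your version shows this extra pair is unnecessary, since $d_{\inf}(U,W)\le d(u,w)$ holds directly.
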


\begin{proof}
The diagram on the right provides a visual reference. 
In order to yield the result,
we show the following
equivalent statement holds, which arises using a standard technique
involving perturbations to prove inequalities involving
infima and suprema:
\begin{equation*}
\underset{\delta >0}{\forall }\left( d_{\inf }(U,W)-\delta \ \ \leq \ \
d_{\sup }(U,V)\;+\;d_{\inf }(V,W)\right) .
\end{equation*}%
\vspace{-0.5cm} 
\begin{parpic}[r]{\includegraphics[width=0.45\linewidth]{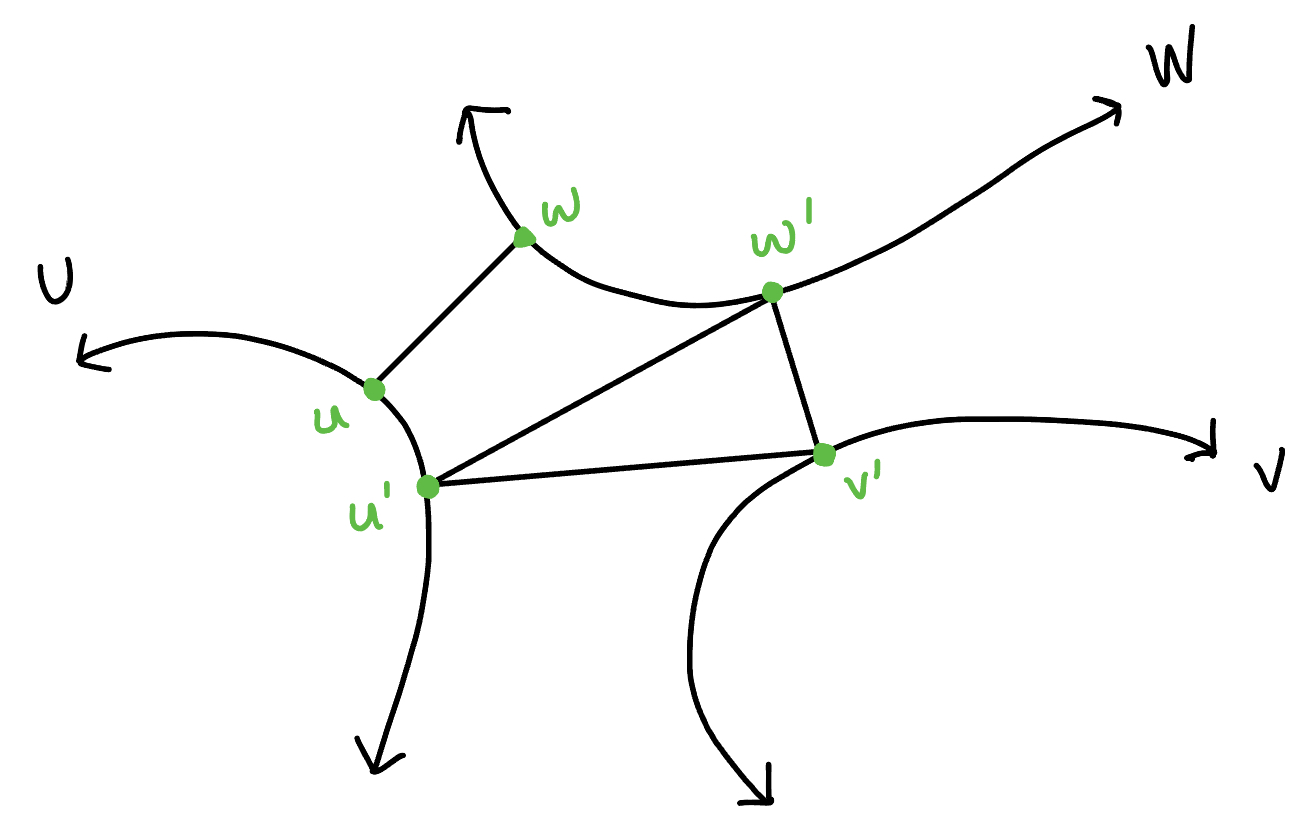}}
\noindent To that end, let $\delta > 0$.  Since $U,V,W$ are non\-empty, there exists
$u,u^{\prime} \in U$, $v^{\prime} \in V$, and $w,w^{\prime} \in W$ such~that
\begin{alignat}{3}
 d_{\inf}(U,W) & \leq d(u,w) && \leq  d_{\inf}(U,W) + \delta/4,     \label{not:1}\\
 d_{\inf}(V,W) & \leq d(v^{\prime},w^{\prime}) && \leq d_{\inf}(V,W)+\delta/4, \label{not:2}\\
 d_{\inf}(U,v^{\prime}) & \leq d(u^{\prime},v^{\prime}) && \leq d_{\inf}(U,v^{\prime}) +\delta/4. \label{not:3}
 \end{alignat}
 Hence, we have
\end{parpic}%
\begin{align*}
d_{\inf }\left( U,W\right) -\delta & \;\;\leq \;\;d\left( u,w\right)
-3\delta /4 & & \text{from }(\ref{not:1}) \\
& \;\;\leq \;\;d\left( u^{\prime },w^{\prime }\right) -\delta /2 & & \text{%
since }d(u,w)-\delta /4\leq d_{\inf }\left( U,W\right) \leq d\left(
u^{\prime },w^{\prime }\right)  \\
& \;\;\leq \;\;d\left( u^{\prime },v^{\prime }\right) +d\left( v^{\prime
},w^{\prime }\right) -\delta /2 & & \text{from the triangle inequality of }d
\\
& \;\;\leq \;\;d_{\inf }(U,v^{\prime })+d_{\inf }\left( V,W\right)  & & 
\text{from }(\ref{not:2})\text{ and }(\ref{not:3}) \\
& \;\;\leq \;\;d_{\sup }\left( U,V\right) +d_{\inf }\left( V,W\right)  & & 
\text{since }d_{\inf }(U,v^{\prime })\leq d_{\sup }\left( U,V\right) .
\end{align*}
\end{proof}

\noindent Now we state and prove the \emph{key} lemma, 
from which  Theorem~\ref{thm:geocharacterization} will follow immediately.

\begin{lemma}[Key Lemma]
\label{lem:key}
Let $p \in \mathbb{R}^2 \setminus V_{\mathbb{R}}(f)$ be such that $C_p$ is smooth at infinity. If $p\in B$, then~$C_{p}~\text{is singular.}$
\end{lemma}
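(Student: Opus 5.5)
We argue by contraposition: assume $C_{p}$ is smooth (in the affine real plane) and smooth at infinity, and show that $p\notin B$. That is, we show there is a neighborhood $N$ of $p$ with $Z_{r}=Z_{p}$ for every $r\in N$; then $p$ lies in the interior of ${Cl}_{Z_p}$ and hence on no $\partial {Cl}_{\upsilon}$. The distances of Definition~\ref{def:distances} and the Mixed Triangle Inequality (Lemma~\ref{lem:inequality}) are designed for exactly this: they separate the start component from the other components uniformly, including near infinity via the Poincar\'e compactification.

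\textbf{Step 1 (separation at $p$).} Since $C_{p}$ is a smooth real algebraic curve whose closure in the compactification is also smooth at infinity, its connected components, taken in the compactified metric $d$, are closed and pairwise disjoint. In particular $\zeta_{p}$ and $\overline{\zeta}_{p}$ have disjoint compact closures under $\hat{\cdot}$. Hence
\[
\varepsilon:=d_{\inf}(\zeta_{p},\overline{\zeta}_{p})>0
\]
(it is $\infty$ if $\overline{\zeta}_p=\emptyset$). The smoothness at infinity is what prevents two branches from approaching each other asymptotically, which would otherwise make $\varepsilon=0$.

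\textbf{Step 2 (continuity of $C_r$ in $r$).} Eliminating $t$, the curve $C_{r}$ is $\{x: f_1(x)f_2(r)-f_2(x)f_1(r)=0\}$, whose coefficients depend continuously (in fact linearly in $f(r)$) on $r$. Because $C_p$ is nonsingular in the affine part and at infinity, the implicit function theorem, applied in affine charts and in the charts at infinity of the compactification, shows that $C_{r}$ is a small smooth deformation of $C_{p}$. Concretely, for $r$ near $p$ there is a homeomorphism $C_p\to C_r$ close to the identity in the metric $d$, which maps components to components. It follows that $d_{\sup}(\zeta_{p},\zeta_{r})\to0$ and $d_{\sup}(\overline{\zeta}_{p},\overline{\zeta}_{r})\to0$ as $r\to p$. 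The points of $V_{\mathbb R}(f)$ are fixed: they lie on every $C_r$, and since they are nonsingular points of $C_p$, each one stays on the deformed copy of the component it lay on.

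\textbf{Step 3 (conclusion).} Choose $r$ close enough to $p$ that both Hausdorff distances are below $\varepsilon/3$. Applying Lemma~\ref{lem:inequality} twice gives
\[
d_{\inf}(\zeta_{r},\overline{\zeta}_{r})>\varepsilon/3 .
\]
So the near-copy of $\zeta_{p}$ is exactly $\zeta_{r}$: it contains $r$, since $r$ is close to $p\in\zeta_p$ and lies on $C_r$, and it is separated from everything else. Now let $s\in V_{\mathbb R}(f)$. If $s\in\zeta_{p}$, then $s\in C_r$ is within $\varepsilon/3$ of $\zeta_r$ and farther from $\overline{\zeta}_r$, so $s\in\zeta_{r}$. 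The case $s\in\overline{\zeta}_{p}$ is symmetric and gives $s\in\overline{\zeta}_{r}$. Hence $Z_{r}=Z_{p}$ on a neighborhood of $p$, so $p\notin B$.

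\textbf{Main obstacle.} The hard part is Step 2, the uniform closeness of $C_{r}$ to $C_{p}$ in the compactified metric. This is the step where smoothness at infinity is used, via a tubular-neighborhood or implicit function argument on the compact curve in the projective or Poincar\'e-sphere model. I would first work in $\mathbb{P}^2$: there the closure of $C_p^{\mathbb C}$ is a smooth real curve, and nearby smooth members of the pencil are ambient-isotopic to it by Ehresmann's theorem in real form. I would then transfer to the metric $d$ by noting that $\hat{\cdot}$ is continuous on the compactification.
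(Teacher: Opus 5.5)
Your overall route is the paper's: contrapositive, Hausdorff closeness in the Poincar\'e metric $d$ of $\zeta_r,\overline{\zeta}_r$ to $\zeta_p,\overline{\zeta}_p$, then Lemma~\ref{lem:inequality}. The gap is in Steps~1 and~2, and it comes from reading ``smooth at infinity'' as ``nonsingular at the points at infinity.'' That reading allows the projective closure to be tangent to the line at infinity $L_\infty$ with even contact. At such a point both affine half-branches of the one smooth branch lie on the same side of $L_\infty$. They therefore tend to the same boundary point of the Poincar\'e disk, and they can belong to different affine components.

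Concretely, take $g=x_1^2x_2+x_2^2+1$. Then $V_{\mathbb{C}}(g)$ is nonsingular, and its closure $x^2y+y^2z+z^3=0$ is nonsingular at its only points at infinity, $[0:1:0]$ and $[1:0:0]$. Yet its two real components ($x_1\ge\sqrt2$ and $x_1\le-\sqrt2$) are both asymptotic to $x_2\approx -x_1^2$. This curve is $C_p$ for $f=(g+\alpha f_2,f_2)$ with generic $\alpha,f_2$ and any $p\in V_{\mathbb{R}}(g)$ with $f_2(p)\neq0$, since then $h(x,p)=f_2(p)g(x)$. So your $\varepsilon=d_{\inf}(\zeta_p,\overline{\zeta}_p)$ in Step~1 is $0$. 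The paper avoids this particular problem by separating only the finite sets $Z_p$ and $\overline{Z}_p$ from the opposite components, which is automatic.

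Step~2 fails for the same reason, and here the paper's proof has the same gap: its assertion of a ``one-to-one correspondence'' of components is exactly what breaks. Your IFT/Ehresmann isotopy lives in $\mathbb{RP}^2$ and need not preserve $L_\infty$. In the example, put $a=f_2^\ast(0,1)\neq 0$ and $t=g(r)/f_2(r)$. Near $[0:1:0]$ the closure of $C_r$ is $z\approx ta-x^2$ in the chart $y=1$.
\begin{itemize}
\item For $ta<0$ the branch lifts off $L_\infty$, and the two components merge into one.
\item For $ta>0$ the branch crosses $L_\infty$ twice: the two components stay apart, and a new far component appears.
\end{itemize}
So there is no homeomorphism $C_p\to C_r$ close to the identity. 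Moreover, if $g$ and $f_2$ have real common zeros on both components, $Z_r$ genuinely jumps across $V_{\mathbb{R}}(g)$ at $p$. Under your reading the conclusion itself fails, so no proof along these lines can close the gap.

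What Steps~1 and~2 need is transversality: the leading form of $h(\cdot,p)$ has $d_1$ distinct roots. Then every real point at infinity is a transverse crossing, so distinct ends go to distinct boundary points, which gives Step~1. The crossing points also move continuously with $r$, so the isotopy can be chosen to preserve $L_\infty$ and lifts to the closed disk, which gives Step~2.

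This transversality is automatic when $d_1>d_2$ and $f_2(p)\neq 0$, since the leading form is then $f_2(p)f_1^\ast$ and Assumption~\ref{assumption}(\ref{smooth-homo}) applies. For $d_1=d_2$ it fails for the finitely many ratios at which $f_2(p)f_1^\ast-f_1(p)f_2^\ast$ has a double root; these correspond to the roots of $\phi$ in the proof of Theorem~\ref{thm:num_S}. You should make transversality the explicit hypothesis rather than try to derive it from nonsingularity at infinity.
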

\begin{proof}
Let $p \in \mathbb{R}^2 \setminus V_{\mathbb{R}}(f)$ be such that $C_p$ is smooth at infinity. 
It suffices to show that if $p\in B$, then  $C_{p}~\text{is singular.}$
We will prove the contrapositive: 
Assume  $C_{p}\ \text{is nonsingular}$ and show  $p\notin B$. 
If all points $q$ sufficiently close to $p$ lie in the same class, that is, $%
Z_p = Z_q$, then $p \notin B$. Hence, it suffices
to show 
\begin{equation}  \label{eq:Zp=Zq}
\underset{\varepsilon>0}{\exists}\ \ \underset{q\neq p}{\forall}\ \
\left\vert \left\vert p-q\right\vert \right\vert _{2}<\varepsilon\
\Longrightarrow \ Z_{p}=Z_{q}.
\end{equation}

\begin{parpic}[r]{\includegraphics[width=0.28\linewidth]{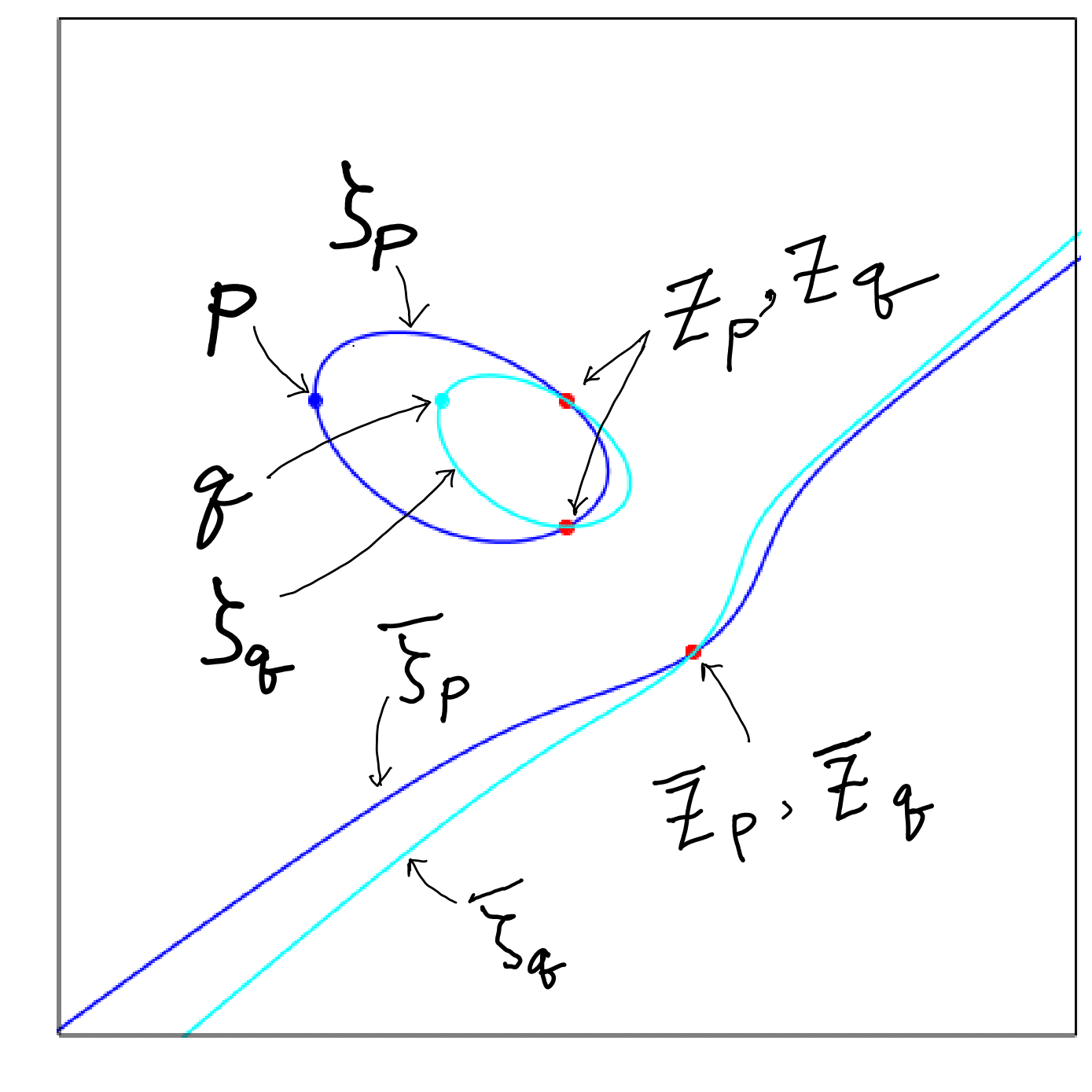}}
\noindent The annotated plot on the right can help  aid in the geometric
meaning of the following argument.

\medskip
\noindent Recall that $C_p$ is nonsingular and smooth at infinity. 
Thus, there is a one-to-one correspondence between the connected
components of $C_p$ and $C_q$ for all $q$ sufficiently close to $p$.

\medskip
\noindent In fact, by using the scaled
distance metric~$d$ in
Definition~\ref{def:distances},
which essentially provides a Poincar\'{e} compactification, 
continuity ensures that the corresponding 
connected components of
$C_p$ and $C_q$ do
not vary too much when $q$ is close to~$p$. That is,
\begin{equation}  \label{eq:DistanceComponents}
\underset{\delta>0}{\forall}\ \ \underset{\varepsilon >0}{\exists}\ \ 
\underset{q\neq p}{\forall}\ \ \left( \ \left\vert \left\vert p-q\right\vert
\right\vert _{2}<\varepsilon\ \ \ \Longrightarrow\ \ \ d_{\sup }\left(
\zeta_{q},\zeta_{p}\right) <\delta \  \land \  d_{\sup }\left( \overline{%
\zeta}_{q},\overline{\zeta}_{p}\right) <\delta\right).
\end{equation}
\end{parpic}

\noindent Now, the finiteness of $Z_p$ and $\overline{Z}_p$ along with disjointness
ensures that 
\[ d_{\inf}\left( Z_{p},\overline{\zeta}_{p}\right) >0 \]
and 
\[d_{\inf}\left( \overline{Z}_{p},\zeta_{p}\right) >0.\] 
To show that $Z_p = Z_q
$ for all $q$ sufficiently close to $p$, we will consider containment in
both directions separately.

\begin{itemize}
\item Claim 1:$\ \underset{\varepsilon>0}{\exists}\ \ \underset{q\neq p}{%
\forall}\ \ \left( \left\vert \left\vert p-q\right\vert \right\vert
_{2}<\varepsilon\ \ \Longrightarrow\ \ Z_{p}\supset Z_{q}\right) $.

If $\overline{Z}_p = \emptyset$, then $Z_p = V_\mathbb{R}(f)$ and the claim
is trivial. Thus, assume that $\overline{Z}_p\neq \emptyset$. The condition $%
Z_q\subset Z_p$ is equivalent to $d_{\inf}(\overline{Z}_p,\zeta_q) > 0$,
i.e., the finite points in $\overline{Z}_p = V_\mathbb{R}(f)\setminus Z_p$
are disjoint from $\zeta_q$ shows that $\zeta_q$ could possibly only contain
the points in $Z_p$. Since $p\in \zeta_p$, we have both $\overline{Z}_p$ and 
$\zeta_p$ are nonempty. Let $\delta = d_{\inf}\left(\overline{Z}%
_{p},\zeta_{p}\right) >0$ and take $\varepsilon>0$ that satisfies %
\eqref{eq:DistanceComponents}, i.e., 
\begin{equation}  \label{eq:DistanceSupInf}
\underset{q\neq p}{\forall}\ \ \left( \left\vert \left\vert p-q\right\vert
\right\vert _{2}<\varepsilon\ \ \Longrightarrow\ \ d_{\sup}\left( \zeta
_{q},\zeta_{p}\right) <d_{\inf}\left( \overline{Z}_{p},\zeta_{p}\right)
\right) .
\end{equation}
Fix $q\neq p$ with $\|p-q\|_2 < \varepsilon$. Since $p\in \zeta_p$ and $q\in
\zeta_q$, Lemma \ref{lem:inequality} and \eqref{eq:DistanceSupInf} yield 
\begin{equation*}
d_{\inf}(\zeta_{p},\overline{Z}_{p}) \ \ \leq \ \
d_{\sup}(\zeta_{p},\zeta_{q})\;+\;d_{\inf}(\zeta_{q},\overline{Z}_{p}) \ \ <
\ \ d_{\inf}(\overline{Z}_p,\zeta_p)\;+\;d_{\inf}(\zeta_{q},\overline{Z}%
_{p}) 
\end{equation*}
so that $d_{\inf}(\zeta_{q},\overline{Z}_{p}) > 0$.

\item Claim 2: $\ \underset{\varepsilon>0}{\exists}\ \ \underset{q\neq p}{%
\forall}\ \ \left( \left\vert \left\vert p-q\right\vert \right\vert
_{2}<\varepsilon\ \ \Longrightarrow\ \ Z_{p}\subset Z_{q}\right) $.

If $C_p=\zeta_p$, i.e., $C_p$ is connected, then $C_q$ is also connected for 
$q$ sufficiently close to $p$ by continuity so that $Z_p = Z_q = V_\mathbb{R}%
(f)$. If $\zeta_p \subsetneq C_p$, then the same holds for all $q$
sufficiently closet to $p$. In this case, one can follow essentially the
same argument as above to show $\overline{Z}_p \supset \overline{Z}_q$,
which is equivalent to $Z_p \subset Z_q$.
\end{itemize}

\bigskip

\noindent The statement~(\ref{eq:Zp=Zq}) follows from Claim 1 and Claim 2 by
choosing the $\varepsilon$ in (\ref{eq:Zp=Zq}) to be the minimum of the $%
\varepsilon$'s in Claims~1 and~2. 
Thus, we have proved the key lemma (Lemma~\ref{lem:key}).
\end{proof}

\newpage

\noindent Finally, using the key lemma (Lemma~\ref{lem:key}), we will prove   Theorem~\ref{thm:geocharacterization}.

\begin{proof}[Proof of Theorem~\ref{thm:geocharacterization}] \

\begin{description}
\item[C1:] If $d_{1}=d_{2}$, then\ \ \ $B\ \ \subset\ \ \bigcup
\limits_{r:\,C_{r}\ \text{is singular}}C_{r}.$

Assume  $d_1 = d_2$. Let $p \in B$. It suffices to show that $C_p$ is singular.
From Assumption~\ref{assumption}(\ref{lin:inf}) and $d_{1} = d_{2}$,
we see that $C_p$ is smooth at infinity.
Thus, from the key lemma (Lemma~\ref{lem:key}), 
it is immediate that $C_p$ is singular.

\item[C2:] If $d_{1}>d_{2}$, then\ \ \ $B\ \ \subset\ \ \bigcup
\limits_{r:\,C_{r}\ \text{is singular}}C_{r}\ \ \ \ \cup\ \ \ \ V_{\mathbb{R}%
}(f_{2})$.

\noindent Assume $d_1 > d_2$. Let $p \in B$ and $p\notin V_\mathbb{R}(f_2)$. It
suffices to show that $C_p$ is singular.
From Assumption~\ref{assumption}(\ref{lin:inf}) and $p \notin  V_\mathbb{R}(f_2),$
we see that $C_p$ is smooth at infinity.
Thus, from the key lemma (Lemma~\ref{lem:key}), 
it is immediate that $C_p$ is singular.
\end{description}
\end{proof}

\section{Algebraic characterization of extended boundary}
\label{sec:singPaths}

\noindent In this section, we will tackle Problem~\ref%
{problem:algcharacterization} by finding an algebraic characterization of
the extended boundary -- that is, finding polynomials that vanish on the
extended boundary. Building on the geometric characterization described in Theorem~\ref%
{thm:geocharacterization}, we start by considering $C_r$, a Newton homotopy
curve (Definition~\ref{def:NewtonHomotopyCurve}). Then, we determine a
condition on $r$ to determine if $C_r$ is singular.

Although $C_r$ is defined via a projection, it is actually algebraic as
shown in the following.

\begin{proposition}
\label{prp:definingPolyCr} If $r\in \mathbb{R}^2\setminus V_\mathbb{R}(f)$,
then $C_{r}=V_{\mathbb{R} }\left(h(x,r)\right)$ where $h(x,r)= \det%
\begin{bmatrix}
f(x) & f(r)%
\end{bmatrix}%
$.
\end{proposition}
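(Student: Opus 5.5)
The plan is to prove the two inclusions $C_r\subseteq V_{\mathbb{R}}(h(x,r))$ and $V_{\mathbb{R}}(h(x,r))\subseteq C_r$ directly from Definition~\ref{def:NewtonHomotopyCurve}. By definition, $x\in C_r$ if and only if there is a real $t$ with $f(x)=t\,f(r)$, i.e., the vector $f(x)\in\mathbb{R}^2$ is a real scalar multiple of the fixed vector $f(r)\in\mathbb{R}^2$. The determinant $h(x,r)=f_1(x)f_2(r)-f_2(x)f_1(r)$ vanishes exactly when the two vectors $f(x)$ and $f(r)$ are linearly dependent. So the statement reduces to elementary linear algebra in $\mathbb{R}^2$, with the hypothesis $r\notin V_{\mathbb{R}}(f)$, i.e., $f(r)\neq 0$, being exactly what rules out a degenerate case.

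For the inclusion $C_r\subseteq V_{\mathbb{R}}(h(x,r))$: take $x\in C_r$, so $f(x)=t f(r)$ for some $t\in\mathbb{R}$. Then the first column of $\begin{bmatrix} f(x) & f(r)\end{bmatrix}$ is $t$ times the second, and the determinant is $0$ by multilinearity. Concretely, $t f_1(r)f_2(r)-t f_2(r)f_1(r)=0$. This direction does not use $f(r)\neq 0$.

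For the reverse inclusion, take real $x$ with $h(x,r)=0$. The columns $f(x)$ and $f(r)$ of a real $2\times 2$ matrix with zero determinant are linearly dependent over $\mathbb{R}$. Since $f(r)\neq 0$, at least one coordinate, say $f_i(r)$, is nonzero. I will set $t=f_i(x)/f_i(r)\in\mathbb{R}$ and check both coordinates. The $i$-th coordinate holds by construction. For the other index $j$, the relation $f_1(x)f_2(r)=f_2(x)f_1(r)$ gives $f_j(x)=f_i(x)f_j(r)/f_i(r)=t f_j(r)$. Hence $(x,t)\in V_r$, and so $x\in C_r$.

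There is no real obstacle; the only point needing care is the reverse direction. Realness of $t$ follows because all quantities are real. The hypothesis $f(r)\neq 0$ is essential here: if $f(r)=0$, then $h(x,r)\equiv 0$ and the equality would fail.
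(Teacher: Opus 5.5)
Your proof is correct and follows essentially the same route as the paper: you prove both inclusions using the linear dependence of the columns $f(x)$ and $f(r)$, and you use $f(r)\neq 0$ to obtain a real $t$ in the reverse direction. The only cosmetic difference is that you build $t=f_i(x)/f_i(r)$ from a nonzero coordinate of $f(r)$. The paper instead takes a unit null vector $v$ of $\begin{bmatrix} f(x) & f(r)\end{bmatrix}$, argues $v_1\neq 0$, and sets $t=-v_2/v_1$.
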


\begin{proof}
Fix $r\in\mathbb{R}^2\setminus V_\mathbb{R}(f)$ and consider $h(x,r) = \det%
\begin{bmatrix}
f(x) & f(r)%
\end{bmatrix}%
$. In particular, since $f(r)\neq 0$, $h(x,r)$ is not the zero polynomial, so $V_\mathbb{C}(h(x,r))$ is indeed a curve in $\mathbb{C}^2$. Thus, we
aim to show that $C_r$ is precisely the real points of this complex curve.
Hence, it suffices to show 
\begin{equation*}
C_{r} = V_{\mathbb{R}}(h(x,r)).
\end{equation*}

Claim: $LHS \supset RHS$. Let $x \in RHS.$ Hence, we have $h(x,r) = 0$.
Thus, there exists $v\in\mathbb{R}^2$ with $\|v\|_2=1$ such that $f(x) v_1 +
f(r) v_2 = 0$. If $v_1=0$, then $|v_2|=1$ which implies $f(r) = 0$, which is
a contradiction. Hence, $v_1\neq 0$ so that $t = -v_2/v_1\in\mathbb{R}$ with 
$f(x)-tf(r) = 0$. Thus, by definition, $x\in LHS$.

Claim: $LHS \subset RHS$. Let $x \in LHS$. Hence, by definition, there
exists $t\in\mathbb{R}$ such that $f(x)-tf(r)=0$. Hence, $v = 
\begin{bmatrix}
1 & -t%
\end{bmatrix}%
^T$ is a nonzero null vector of $%
\begin{bmatrix}
f(x) & f(r)%
\end{bmatrix}%
$ showing that $h(x,r) = \det 
\begin{bmatrix}
f(x) & f(r)%
\end{bmatrix}
= 0$. Thus, $x \in RHS.$
\end{proof}

\bigskip

Next, we consider the \textit{singular} Newton homotopy curves. To do this,
we need a consistent way to define a Newton homotopy curve, while the
following shows that a Newton homotopy curve can be defined in many
equivalent ways.

\newpage

\begin{proposition}
If $r\in\mathbb{R}^2\setminus V_\mathbb{R}(f)$ and $s \in C_{r}\setminus V_%
\mathbb{R}(f)$, then $C_{r}=C_{s}$.
\end{proposition}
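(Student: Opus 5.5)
By Proposition~\ref{prp:definingPolyCr}, $C_r = V_\mathbb{R}(h(x,r))$ and $C_s = V_\mathbb{R}(h(x,s))$. Both applications are legitimate: $r\notin V_\mathbb{R}(f)$ by hypothesis, and $s\notin V_\mathbb{R}(f)$ by hypothesis (and $s$ is real since $C_r\subset\mathbb{R}^2$). So the statement reduces to showing that $V_\mathbb{R}(h(x,r)) = V_\mathbb{R}(h(x,s))$. My plan is to show that the vectors $f(r)$ and $f(s)$ are nonzero scalar multiples of each other. Once that holds, $h(x,s)$ is a nonzero constant times $h(x,r)$, and the two zero sets coincide.

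\textbf{Step 1: the key relation.} Since $s\in C_r$, there is $t\in\mathbb{R}$ with $f(s) = t\, f(r)$. Since $s\notin V_\mathbb{R}(f)$, we have $f(s)\neq 0$, which forces $t\neq 0$.

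\textbf{Step 2: the determinant scales.} By multilinearity of the determinant in its second column,
\[
h(x,s) = \det\begin{bmatrix} f(x) & f(s)\end{bmatrix} = \det\begin{bmatrix} f(x) & t f(r)\end{bmatrix} = t\, h(x,r).
\]
Because $t\neq 0$, $h(x,s)$ and $h(x,r)$ have the same real zero set.

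\textbf{Step 3: conclude.} Applying Proposition~\ref{prp:definingPolyCr} to both points gives
\[
C_s = V_\mathbb{R}(h(x,s)) = V_\mathbb{R}(h(x,r)) = C_r.
\]

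The only point that needs care is ruling out $t=0$, and the hypothesis $s\notin V_\mathbb{R}(f)$ handles it directly. As an alternative that avoids the determinant, one can argue straight from Definition~\ref{def:NewtonHomotopyCurve}. If $f(x) = u f(r)$, then $f(x) = (u/t) f(s)$. Conversely, if $f(x) = u f(s)$, then $f(x) = (ut) f(r)$. This gives mutual inclusion of $C_r$ and $C_s$.
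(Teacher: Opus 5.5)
Your proof is correct. Your main argument goes through Proposition~\ref{prp:definingPolyCr}: from $f(s)=t\,f(r)$ with $t\neq 0$, linearity of the determinant in its second column gives $h(x,s)=t\,h(x,r)$, so the two real zero sets agree.

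The paper argues instead directly from Definition~\ref{def:NewtonHomotopyCurve}, rescaling the homotopy parameter via $t'=t/t_s$. That is exactly the alternative you sketch at the end.

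The direct route is more elementary and does not depend on the determinantal description, whose own proof needs $f(r)\neq 0$ to rule out $v_1=0$. Your determinant route proves slightly more: $h(x,s)$ is a nonzero multiple of $h(x,r)$ as polynomials, so the complex curves and their singular points coincide as well. The paper implicitly relies on this form of invariance when it later represents each singular curve by its singular point and tests singularity through $h$.
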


\begin{proof}
Let $r\in\mathbb{R}^2\setminus V_\mathbb{R}(f)$ and $s \in C_{r}\setminus V_%
\mathbb{R}(f)$. Then for some $t_s\in\mathbb{R}$ we have $f(s) = t_sf(r)$. 
Since $f(s)\neq 0$, we have $t_s\neq 0$. Thus $f(r)=\frac{1}{t_s}f(s)$.
Let  $x \in \mathbb{R}^2$ be arbitrary but fixed. Note
\[
x \in C_r 
\,\iff\,  \underset{t \in \mathbb{R}}\exists\,\, f(x) = t f(r) 
\,\iff\,   \underset{t \in \mathbb{R}}\exists\,\, f(x) = \frac{t}{t_s} f(s) 
\,\iff\,   \underset{t' \in \mathbb{R}}\exists\,\, f(x) = t' f(s) 
\,\iff\,   x \in C_s
\]
where $t'=\frac{t}{t_s}$.
\end{proof}

This result shows that we are able to describe a Newton homotopy curve by
selecting any point on the curve that is not contained in $V_\mathbb{R}(f)$.
By the genericity assumptions (Assumption~\ref{assumption}(\ref{lin:sing})), a singular
Newton homotopy curve has exactly one singular point and this point is not
contained in~$V_\mathbb{R}(f)$. Hence, we choose to describe each singular
Newton homotopy curve by taking the start point to be the singular point as
illustrated in the following.

\begin{center}
\begin{tabular}{ccc}
\includegraphics[width=1in]{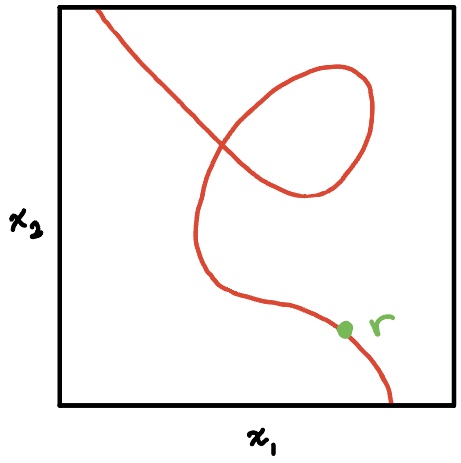}
& \hspace{1cm} & %
\includegraphics[width=1in]{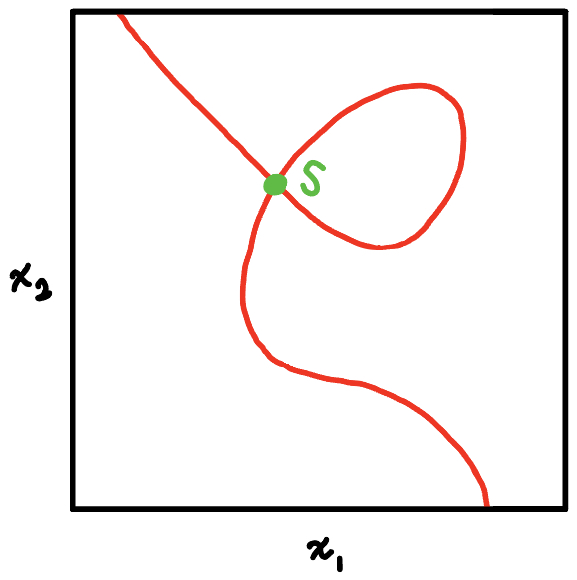}
\\ 
$C_{r}$ & \hspace{1cm} & $C_{s}$%
\end{tabular}
\end{center}

\begin{definition}[Set of singular points]
Define 
\begin{equation}  \label{def:S}
S=\left\{ r\in\mathbb{R}^{n}:\ f\left( r\right) \neq 0\ \text{and }r\text{
is a singular point of }C_{r}\right\}.
\end{equation}
\end{definition}

\begin{example}[Running Example continued]
\label{ex:singularbluepoints} For the running example (Example~\ref%
{runningExample}), a look at the first figure from Example~\ref{ex:generic_bound}
gives us an example of this set of singular points $S$ for a bivariate
system with $d_1=d_2=3$. Here, the three blue points and one green point
make up 4 singular points total with $(r_1, r_2)$ coordinates approximately $%
(0,0)$, $(-0.6393,2.1722)$, $(-2.3650, -1.0466)$, and $(0.7463, 1.7978)$,
where this last point is the green center in the figure.
\end{example}

The following lemma provides a \emph{constructible} description of $S$.

\begin{lemma}
\label{lemma:S} The set of singular points is given by $S=V_{\mathbb{R}%
}(\gamma)\setminus V_{\mathbb{R}}(f)$ where 
\begin{equation}  \label{eq:gamma}
\gamma=%
\begin{bmatrix}
\det%
\begin{bmatrix}
\partial_{x_{1}}f & f%
\end{bmatrix}
\\ 
\det%
\begin{bmatrix}
\partial_{x_{2}}f & f%
\end{bmatrix}%
\end{bmatrix}%
.
\end{equation}
\end{lemma}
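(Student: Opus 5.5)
We will use Proposition~\ref{prp:definingPolyCr}, which says that for $r\notin V_\mathbb{R}(f)$ the curve $C_r$ is the real zero set of $h(x,r)=\det\begin{bmatrix} f(x) & f(r)\end{bmatrix} = f_1(x)f_2(r)-f_2(x)f_1(r)$. Accordingly, we read ``$r$ is a singular point of $C_r$'' as the statement that $x=r$ is a singular point of the plane curve $h(\cdot,r)=0$. Since $h(r,r)=\det\begin{bmatrix} f(r) & f(r)\end{bmatrix}=0$ identically, the point $r$ always lies on $C_r$. Singularity at $x=r$ therefore means exactly
\[
\partial_{x_1}h(x,r)\big|_{x=r}=0 \quad\text{and}\quad \partial_{x_2}h(x,r)\big|_{x=r}=0 .
\]
The plan is to compute these two partial derivatives and identify them with the entries of $\gamma$.

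\textbf{The computation.} Since $f(r)$ is constant in $x$, multilinearity of the determinant in its columns gives
\[
\partial_{x_i}h(x,r)=\det\begin{bmatrix}\partial_{x_i}f(x) & f(r)\end{bmatrix}.
\]
Evaluating at $x=r$ yields $\det\begin{bmatrix}\partial_{x_i}f(r) & f(r)\end{bmatrix}=\gamma_i(r)$. Hence, for $r$ with $f(r)\neq 0$, the point $r$ is a singular point of $h(\cdot,r)$ if and only if $\gamma(r)=0$. This gives both inclusions $S\subset V_\mathbb{R}(\gamma)\setminus V_\mathbb{R}(f)$ and $V_\mathbb{R}(\gamma)\setminus V_\mathbb{R}(f)\subset S$ simultaneously.

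\textbf{The main obstacle} is conceptual rather than computational. We must make sure that ``singular point of $C_r$'' is taken in the algebraic sense, namely that the gradient of the defining polynomial $h(\cdot,r)$ vanishes. A purely set-theoretic reading of the real set $C_r$ would fail here: at a real isolated singular point, such as the green point in Example~\ref{ex:generic_bound}, the real set is not a manifold in the usual sense, yet the gradient still vanishes there.

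Two further points need a short remark.

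\begin{enumerate}
\item The polynomial $h(\cdot,r)$ is nonzero for $f(r)\neq 0$, as noted in the proof of Proposition~\ref{prp:definingPolyCr}.
\item By Assumption~\ref{assumption}(\ref{lin:sing}), a singular point is an ordinary double point, so $h(\cdot,r)$ is reduced near $r$. This means the gradient criterion is the right notion of singularity and is not an artifact of a nonreduced defining polynomial.
\end{enumerate}

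With the gradient interpretation fixed, the proof reduces to the derivative computation above.
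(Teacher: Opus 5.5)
Your proof is correct and follows essentially the paper's argument. Both use Proposition~\ref{prp:definingPolyCr} to read ``$r$ is a singular point of $C_r$'' as vanishing of the $x$-gradient of $h(\cdot,r)$ at $x=r$, note that $h(r,r)=0$ holds automatically, and identify $(\partial_{x_i}h)(r,r)=\det\begin{bmatrix}\partial_{x_i}f(r) & f(r)\end{bmatrix}=\gamma_i(r)$. (Your aside about the isolated real point is not actually a case where a set-theoretic reading would disagree, since such a point is not a one-dimensional manifold point either; this does not affect the argument.)
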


\begin{proof}  Note
\begin{align*}
S &= \left\{ r\in\mathbb{R}^{n}:\ f\left( r\right) \neq 0\ \text{and }r\text{
is a singular point of }C_{r}\right\} \;\;\;\;\;\;\, \text{by \eqref{def:S}} \\
&= \{ r\in\mathbb{R}^{n} : r\text{ is a singular point of }C_{r}\} \setminus
V_{\mathbb{R}}(f) \;\;\;\;\;\;\;\;\;\;\;\;\;\;\;\;\; \text{ since } f(r)
\neq 0 \\
&= \{r\in\mathbb{R}^{n}: h(r,r) = 0 \text{ and } (\partial_{x}h)(r,r) =0\}
\setminus V_{\mathbb{R}}(f) \;\;\;\;\;\;\;\;\;\;\, \text{ since } C_{r} = V_{%
\mathbb{R}}(h(x,r)) \\
&= \{r\in\mathbb{R}^{n}: (\partial_{x}h)(r,r) =0\} \setminus V_{\mathbb{R}%
}(f) \;\;\;\;\;\;\;\;\;\;\;\;\;\;\;\;\;\;\;\;\;\;\;\;\;\;\;\;\;\;\;\;\;\;\; 
\text{ since } h(r,r) = 0 \text{ for any } r
\end{align*}
Note that $\gamma(p)$ is the transpose of the gradient of $h(x,r) = \det%
\begin{bmatrix}
f(x) & f(r)%
\end{bmatrix}%
$ with respect to $x$ evaluated at $(x,r)=(p,p)$ and $h(p,p) = 0$, using the
convention that the gradient is a row vector. In particular, 
\begin{align*}
S &= \{r\in\mathbb{R}^{n}: (\partial_{x}h^T)(r,r) =0\} \setminus V_{\mathbb{R%
}}(f) \;\;\;\;\;\; \text{ since } V_{\mathbb{R}}( (\partial_{x}h^T)(r,r)) =
V_{\mathbb{R}}( (\partial_{x}h)(r,r)) \\
&= \{r\in\mathbb{R}^{n}: \gamma(r) =0\} \setminus V_{\mathbb{R}}(f)
\;\;\;\;\;\;\;\;\;\;\;\;\;\;\;\;\;\, \text{ by the note above } \\
&= V_{\mathbb{R}}(\gamma) \setminus V_{\mathbb{R}}(f)
\end{align*}
\end{proof}

Next, we will give an \emph{algebraic} description of $S$.

\begin{lemma}
\label{lem:polysforS} Let $\gamma$ as in \eqref{eq:gamma} and $\partial_x f$
be the Jacobian matrix of $f$. Then, $S=V_{\mathbb{R}}(\gamma,\det(%
\partial_x f))$.
\end{lemma}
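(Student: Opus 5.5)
The plan is to prove the two inclusions separately, using Lemma~\ref{lemma:S}, which already gives $S=V_{\mathbb{R}}(\gamma)\setminus V_{\mathbb{R}}(f)$. Write $\partial_x f=\begin{bmatrix}\partial_{x_1}f & \partial_{x_2}f\end{bmatrix}$, so its columns are the vectors $\partial_{x_1}f$ and $\partial_{x_2}f$ appearing in \eqref{eq:gamma}. Both inclusions are elementary linear algebra at a single point; the only subtle point is the role of the genericity assumption.

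\emph{Inclusion $S\subset V_{\mathbb{R}}(\gamma,\det(\partial_x f))$.} Let $r\in S$. By Lemma~\ref{lemma:S}, $\gamma(r)=0$ and $f(r)\neq 0$.
\begin{itemize}
\item The vanishing of $\det\begin{bmatrix}\partial_{x_j}f(r) & f(r)\end{bmatrix}$ for $j=1,2$ means each column $\partial_{x_j}f(r)$ is linearly dependent on $f(r)$.
\item Since $f(r)\neq0$, each $\partial_{x_j}f(r)$ is a scalar multiple of $f(r)$.
\item Hence both columns of $\partial_x f(r)$ lie in the line spanned by $f(r)$. So $\partial_x f(r)$ has rank at most one and $\det(\partial_x f)(r)=0$.
\end{itemize}

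\emph{Inclusion $V_{\mathbb{R}}(\gamma,\det(\partial_x f))\subset S$.} By Lemma~\ref{lemma:S}, it suffices to show that $V_{\mathbb{R}}(\gamma,\det(\partial_x f))$ does not meet $V_{\mathbb{R}}(f)$.
\begin{itemize}
\item Points of $V_{\mathbb{R}}(f)$ automatically satisfy $\gamma=0$, since the second column of each determinant vanishes. So $\gamma$ alone cannot exclude them.
\item This is exactly where Assumption~\ref{assumption}(\ref{com-int}) is needed. All $d_1d_2$ solutions of $f=0$ in $\mathbb{C}^2$ are nonsingular, so $\det(\partial_x f)(s)\neq0$ for every $s\in V_{\mathbb{C}}(f)$, in particular for every $s\in V_{\mathbb{R}}(f)$.
\item Therefore $V_{\mathbb{R}}(\gamma,\det(\partial_x f))\cap V_{\mathbb{R}}(f)=\emptyset$. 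Consequently $V_{\mathbb{R}}(\gamma,\det(\partial_x f))\subset V_{\mathbb{R}}(\gamma)\setminus V_{\mathbb{R}}(f)=S$.
\end{itemize}

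I do not expect a genuine obstacle. The step needing care is recognizing that appending $\det(\partial_x f)$ is precisely what removes the spurious points of $V_{\mathbb{R}}(f)$ from $V_{\mathbb{R}}(\gamma)$, and that this relies on the nonsingularity of the solutions in Assumption~\ref{assumption}(\ref{com-int}).
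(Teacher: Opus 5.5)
Your proof is correct and follows the paper's argument. Both reduce via Lemma~\ref{lemma:S} to showing $V_{\mathbb{R}}(\gamma)\setminus V_{\mathbb{R}}(f)=V_{\mathbb{R}}(\gamma,\det(\partial_x f))$; both use the nonsingularity in Assumption~\ref{assumption}(\ref{com-int}) to exclude $V_{\mathbb{R}}(f)$, and use $f(p)\neq 0$ to force $\det(\partial_x f)(p)=0$. Your column-dependence phrasing is equivalent to the paper's observation that $\begin{bmatrix} f_2(p) & -f_1(p)\end{bmatrix}$ is a nonzero left null vector of $\partial_x f(p)$.
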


\begin{proof} 
Recall that, from Lemma~\ref{lemma:S}, we have that $S = V_{\mathbb{R}%
}(\gamma)\setminus V_{\mathbb{R}}(f) $. Thus, it suffices to show 
\begin{equation*}
V_{\mathbb{R}}(\gamma)\setminus V_{\mathbb{R}}(f) \,\,= \,\, V_\mathbb{R}%
(\gamma,\det(\partial_x f)).
\end{equation*}

Claim: $LHS \supset RHS.$ Let $p \in RHS.$ Then, $\gamma(p)=0$ and $%
\det(\partial_{x}f)(p)=0.$ By the genericity assumptions (Assumption~\ref{assumption}(\ref{com-int})), $f(p)\neq0.$ Hence, $p \in LHS.$

Claim: $LHS \subset RHS.$ Let $p \in LHS$. Then $\gamma(p)=0$ and $f(p) \neq0
$. By writing 
\begin{equation*}
h(x,r) = \det%
\begin{bmatrix}
f(x) & f(r)%
\end{bmatrix}
= f_2(r) f_1(x) - f_1(r) f_2(x),
\end{equation*}
the vanishing of the transpose of the gradient of~$h(x,r)$ with respect to $x
$ evaluated at $(p,p)$ is equivalent to $%
\begin{bmatrix}
f_2(p) & -f_1(p)%
\end{bmatrix}%
$ being a left null vector of~$\partial_x f(p)$. Since $LHS\subset\mathbb{R}%
^2\setminus V_\mathbb{R}(f)$, such a null vector is nonzero. Thus, $%
\det(\partial_x f)(p)=0.$ Hence, $p \in RHS$.
\end{proof}

Finally we are ready to state the main result of this section.

\begin{theorem}[Algebraic characterization of the extended boundary]
\label{thm:DefiningPolynomialsBdry} For $S$ as in \eqref{def:S}, define $%
\mathcal{H}(x) = \prod_{r \in S} h(x,r)$ and $\mathcal{H}_2 = \mathcal{H}%
\cdot f_2$. Then, we have the following.

\begin{description}
\item[C1:] If $d_{1}=d_{2}$, then\ \ \ $\overline{B}\ \ \subset\ \ V_\mathbb{%
R}(\mathcal{H})$.

\item[C2:] If $d_{1}>d_{2}$, then\ \ \ $\overline{B}\ \ \subset\ \ V_\mathbb{%
R}(\mathcal{H}_2)$.
\end{description}
\end{theorem}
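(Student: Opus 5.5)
The plan is to reduce everything to Theorem~\ref{thm:geocharacterization} and then pass to the Zariski closure. The key observations are that $V_\mathbb{R}(\mathcal{H})$ and $V_\mathbb{R}(\mathcal{H}_2)$ are real algebraic sets, and that $\overline{B}$ is, by Definition~\ref{def:extended boundary}, the intersection of all real algebraic supersets of $B$. So it suffices to prove $B\subset V_\mathbb{R}(\mathcal{H})$ in case C1 and $B\subset V_\mathbb{R}(\mathcal{H}_2)$ in case C2.

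First I must check that $\mathcal{H}$ is a genuine polynomial, which requires $S$ to be finite.
\begin{itemize}
\item By Lemma~\ref{lem:polysforS}, $S=V_\mathbb{R}(\gamma,\det(\partial_x f))$.
\item I would argue finiteness from Assumption~\ref{assumption}(\ref{lin:sing}). Each singular curve $C_r^\bC$ has exactly one singular point, so distinct singular curves yield distinct points of $S$.
\item The family of curves $C_r$ is a one-parameter pencil $\lambda_2 f_1-\lambda_1 f_2$ parametrized by $[f_1(r):f_2(r)]\in\mathbb{P}^1$. In a Lefschetz pencil only finitely many members are singular.
\item Alternatively, one can note directly that $V_\bC(\gamma,\det\partial_x f)$ is zero-dimensional under genericity.
\end{itemize}
I expect this finiteness point to be the main (mild) obstacle, since the paper has not stated it explicitly. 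If needed, I would simply cite the Lefschetz-pencil genericity underlying Assumption~\ref{assumption}(\ref{lin:sing}). Each factor $h(x,r)$ with $r\in S$ is a nonzero polynomial because $f(r)\neq 0$, so $\mathcal{H}$ is a nonzero polynomial.

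Next comes the identification of the union in Theorem~\ref{thm:geocharacterization} with $V_\mathbb{R}(\mathcal{H})$. Take any $r$ with $C_r$ singular, and let $s$ be its singular point.
\begin{itemize}
\item By genericity (Assumption~\ref{assumption}(\ref{com-int})), $s\notin V_\mathbb{R}(f)$: at a point of $V(f)$ the homotopy curve is smooth, since $\partial_x f$ is invertible there.
\item By the preceding proposition, $C_r=C_s$. Since $s$ is a singular point of $C_s$, we get $s\in S$.
\item Proposition~\ref{prp:definingPolyCr} gives $C_r=C_s=V_\mathbb{R}(h(x,s))\subset V_\mathbb{R}(\mathcal{H})$.
\end{itemize}
Hence $\bigcup_{r:\,C_r\text{ singular}}C_r\subset V_\mathbb{R}(\mathcal{H})$.

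To conclude C1, Theorem~\ref{thm:geocharacterization} gives $B\subset V_\mathbb{R}(\mathcal{H})$, and taking the real Zariski closure gives $\overline{B}\subset V_\mathbb{R}(\mathcal{H})$. For C2, the theorem gives $B\subset V_\mathbb{R}(\mathcal{H})\cup V_\mathbb{R}(f_2)=V_\mathbb{R}(\mathcal{H}\cdot f_2)=V_\mathbb{R}(\mathcal{H}_2)$, and the same closure argument finishes the proof.
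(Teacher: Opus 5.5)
Your proposal is correct and follows essentially the same route as the paper, whose one-line proof combines Theorem~\ref{thm:geocharacterization} with Proposition~\ref{prp:definingPolyCr} and the description of $S$ (Lemmas~\ref{lemma:S} and~\ref{lem:polysforS}), and then passes to the real Zariski closure. Your additional checks (finiteness of $S$, so that $\mathcal{H}$ is a genuine polynomial, and that the singular point $s$ lies off $V_\mathbb{R}(f)$) fill in details the paper leaves implicit, as do two small facts you use tacitly: $s$ is real because the unique singular point of a curve with real coefficients is fixed by conjugation, and $s$ is singular for $h(x,s)$ because $h(x,s)=t_s\,h(x,r)$ with $t_s\neq 0$.
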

\begin{proof}
Immediate from Proposition~\ref{prp:definingPolyCr}, Lemmas~%
\ref{lemma:S} and~\ref{lem:polysforS}, and Theorem~\ref%
{thm:geocharacterization}.
\end{proof}

\begin{example}[Running Example continued]
\label{ex:gamma} For the running example (Example~\ref{runningExample})
in which \mbox{$d_1=d_2=3$}, we can define the system $\gamma$ as follows: 
\begin{align*}
\gamma&=%
\begin{bmatrix}
\det[\partial_{x_1}f & f] \\ 
\det[\partial_{x_2}f & f]%
\end{bmatrix}%
=%
\begin{bmatrix}
\det%
\begin{bmatrix}
3x_1^2 - x_2 & x_1^3 - x_1x_2 - x_2^2 + 1 \\ 
x_2 - 2x_1 & - x_1^2 + x_1x_2 + x_2^3 + 1%
\end{bmatrix}
\\ 
\det 
\begin{bmatrix}
- x_1 - 2x_2 & x_1^3 - x_1x_2 - x_2^2 + 1 \\ 
3x_2^2 + x_1 & - x_1^2 + x_1x_2 + x_2^3 + 1%
\end{bmatrix}%
\end{bmatrix}
\\
&= 
\begin{bmatrix}
- x_1^4 + 2x_1^3x_2 + 3x_1^2x_2^3 - x_1^2x_2 + 3x_1^2 - 2x_1x_2^2 + 2x_1 -
x_2^4 + x_2^3 - 2x_2 \\ 
- x_1^4 - 3x_1^3x_2^2 + x_1^3 + 2x_1^2x_2 + 2x_1x_2^3 - x_1x_2^2 - 2x_1 +
x_2^4 - 3x_2^2 - 2x_2%
\end{bmatrix}%
\end{align*}
This system has 21 solutions of
which 7 are real and 3 of these satisfy $f$.
Thus, we are
left with the four singular points as previously shown in Example~\ref%
{ex:singularbluepoints}.
\end{example}

\section{Local structure of  extended boundary at singularity}

\label{sec:TangentCone}

We now consider Problem~\ref{problem:localStructure} for characterizing 
the local structure of the extended boundary at a~singularity. 

\medskip

\begin{lemma}
\label{lem:tangentgamma} Let $r \in S$. Let $T_r$ be the tangent cone of $%
C_{r}$ at $r$. Then, we have

\begin{enumerate}
\item $T_{r} =\{v \in \mathbb{C}^{2}: v^T(\partial_{x}\gamma)(r)v=0\} $

\item If $r$ is not an isolated point in $C_r$, then $T_r$ consists of two
real lines.
\end{enumerate}
\end{lemma}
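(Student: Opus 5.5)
The plan is to work with the defining polynomial $h(x,r)=\det\begin{bmatrix} f(x) & f(r)\end{bmatrix}$ from Proposition~\ref{prp:definingPolyCr}, so that $C_r=V_\mathbb{R}(h(\cdot,r))$. Since $r\in S$, we have $h(r,r)=0$ and $(\partial_x h)(r,r)=0$ (as in the proof of Lemma~\ref{lemma:S}). By Assumption~\ref{assumption}(\ref{lin:sing}), $r$ is an ordinary double point of $C_r^\bC$. Hence the tangent cone at $r$ is given by the lowest-order nonvanishing part of the Taylor expansion of $h(\cdot,r)$ about $r$:
\[
T_r=\{v\in\bC^2:\ v^T\,\mathrm{Hess}_x h(r,r)\,v=0\},
\]
and $\mathrm{Hess}_x h(r,r)$ is a nonzero, indeed nondegenerate, symmetric $2\times2$ real matrix. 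So part~1 reduces to the identity $v^T(\partial_x\gamma)(r)v=v^T\,\mathrm{Hess}_x h(r,r)\,v$ for all $v$.

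To prove this identity, I will introduce $g(x,r)=(\partial_x h(x,r))^T$, so that $\gamma(p)=g(p,p)$. By the chain rule,
\[
(\partial\gamma)(p)=(\partial_x g)(p,p)+(\partial_r g)(p,p).
\]
The first term is exactly $\mathrm{Hess}_x h(p,p)$. For the second term I will use the expansion $h(x,r)=f_2(r)f_1(x)-f_1(r)f_2(x)$, which gives
\[
g(x,r)=f_2(r)\nabla f_1(x)^T-f_1(r)\nabla f_2(x)^T,
\]
and hence
\[
(\partial_r g)(p,p)=\nabla f_1(p)^T\nabla f_2(p)-\nabla f_2(p)^T\nabla f_1(p).
\]
This matrix is of the form $M-M^T$, so it is antisymmetric, and therefore $v^TMv-v^TM^Tv=0$ for every $v$. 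This proves the identity and hence part~1. I should also remark that, although $\partial\gamma(r)$ itself need not be symmetric, its quadratic form coincides with that of the symmetric Hessian.

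For part~2, let $Q=\mathrm{Hess}_x h(r,r)$, which is real symmetric and nondegenerate. If $Q$ is definite, then the Morse lemma (or simply the second-order Taylor bound $h(x,r)\ge c\|x-r\|^2$ near $r$, up to sign) shows that $r$ is an isolated point of $C_r$. If $r$ is not isolated, then $Q$ must therefore be indefinite. In that case $\det Q<0$, and $v^TQv$ factors over $\mathbb{R}$ into two distinct real linear forms, so $T_r$ consists of two real lines.

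The main obstacle is justifying nondegeneracy of $Q$. This is precisely what the ordinary-double-point clause of Assumption~\ref{assumption}(\ref{lin:sing}) provides, and I will invoke it directly rather than reprove it.
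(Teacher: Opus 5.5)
Your proof is correct. It follows the paper's outline for part~1 but fills in a step the paper skips, and it takes a different route for part~2.

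\textbf{Part 1.} Like the paper, you identify the quadratic form of $(\partial_x\gamma)(r)$ with the Hessian of $h(\cdot,r)$ at $r$ and then invoke the ordinary-double-point assumption. The paper simply asserts that $\partial_x\gamma$ \emph{is} the Hessian of $h$, ``since the Hessian of a polynomial is a symmetric matrix.'' It never addresses that $\gamma(p)=g(p,p)$ also depends on $p$ through the $r$-slot; your chain-rule computation handles exactly this.

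Your computation in fact gives a little more than you claim. In two variables the antisymmetric term is exactly $\det(\partial_x f)(p)\begin{bmatrix}0&1\\-1&0\end{bmatrix}$. This vanishes at every $r\in S$ by Lemma~\ref{lem:polysforS}, so $(\partial_x\gamma)(r)$ actually equals the symmetric Hessian there. That justifies the paper's literal claim and matches the symmetric matrix in Example~\ref{ex:tangentconecalc}. Your hedge that $\partial\gamma(r)$ ``need not be symmetric'' is therefore unnecessary, though harmless, since only the quadratic form is used.

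\textbf{Part 2.} The paper argues geometrically. Real points of $C_r$ accumulating at $r$ give real secant directions, and their limit is a real line of $T_r$. Since the two lines of $T_r$ are either both real or complex conjugates, both must be real. You instead use the definite/indefinite dichotomy for the real nondegenerate form $Q$: a second-order Taylor bound shows that definiteness forces $r$ to be isolated in $C_r$.

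Your version is more self-contained, since it avoids the secant-limit description of the tangent cone. It also directly yields $\det Q<0$, so the two real lines are visibly distinct. The paper's version needs no quantitative estimate but relies on that standard fact about tangent cones.
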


\begin{proof}
\ 

\begin{enumerate}
\item As in the proof of Lemma~\ref{lemma:S}, $\gamma(p)$ is the transpose
of the gradient of $h(x,r)$ with respect to $x$ evaluated at $(x,r) = (p,p)$%
, using the convention that the gradient is a row vector. Hence,~$\partial_x
\gamma$ is the corresponding Hessian matrix of $h(x,r)$ since the Hessian of
a polynomial is a symmetric matrix. Thus, this claim follows from the
genericity assumptions (Assumption~\ref{assumption}(\ref{lin:sing})) in that~$r$
is an ordinary double point.

\item As $r$ is an ordinary double point, $(\partial_x \gamma)(r)$ is a full
rank matrix. Hence, $T_r$ consists of two lines, which either are both real
or complex conjugates of each other. When $r$ is not an isolated point in $%
C_r$, at least one of the lines in $T_r$ must be real since nonisolated
implies the existence of real secant lines arbitrarily close to a line in $%
T_r$. Hence, it follows that both of the lines in $T_r$ must be real when $r$
is not an isolated point in $C_r$.
\end{enumerate}
\end{proof}

\renewcommand{\arraystretch}{1.7}
\begin{example}[Running Example continued]
\label{ex:tangentconecalc} Let us consider the 
local structure for the running example (Example~\ref%
{runningExample}). From Example~\ref{ex:gamma}, we have obtained 
\begin{equation*}
\gamma = \left[
\begin{array}[c]{l}
- x_1^4 + 2x_1^3x_2 + 3x_1^2x_2^3 - x_1^2x_2 + 3x_1^2 - 2x_1x_2^2 + 2x_1 -
x_2^4 + x_2^3 - 2x_2 \\ 
- x_1^4 - 3x_1^3x_2^2 + x_1^3 + 2x_1^2x_2 + 2x_1x_2^3 - x_1x_2^2 - 2x_1 +
x_2^4 - 3x_2^2 - 2x_2%
\end{array} 
\right]
\end{equation*}
\renewcommand{\arraystretch}{1.3}
We will compute the tangent cone of $C_{(0,0)}$ at $(0,0)$ using Lemma~\ref%
{lem:tangentgamma}. Note 
\begin{align*}
T_{(0,0)}&=\{v : v^T(\partial_{x}\gamma)(0,0)v=0\} \;\;\;\;\;\;\;\;\;\;\;\;
\;\;\;\;\;\;\;\;\;\;\text{ from Lemma~\ref{lem:tangentgamma}} \\
&= \left\{ v :v^T%
\begin{bmatrix}
2 & -2 \\ 
-2 & -2%
\end{bmatrix}
v=0 \right\} \;\;\;\;\;\;\;\;\;\;\; \;\;\;\;\;\;\;\,\text{ by evaluating 
} (\partial_{x}\gamma)(0,0) \\
&= \text{span}%
\begin{bmatrix}
1+\sqrt{2} \\ 
1%
\end{bmatrix}%
\;\; \cup \;\;\text{span}%
\begin{bmatrix}
1-\sqrt{2} \\ 
1%
\end{bmatrix}%
\;\;\;\;\; \, \text{ by solving for } v.
\end{align*}
Note that $(0,0)$ is not an isolated point. Thus, as Lemma~\ref%
{lem:tangentgamma} claims, $T_{(0,0)}$ consists of two real lines.
\end{example}
\renewcommand{\arraystretch}{1.0}

\newpage

\begin{theorem}[Local structure of extended boundary at a singularity]
\label{thm:tangentcone} If $r\in S$ and $\gamma$ as in \eqref{eq:gamma},
then the tangent cone $T_r$ of $C_r$ at $r$ is equal to the union of the
eigenspaces of~$(\partial_{x}P \gamma)(r)$ where 
\begin{equation}  \label{P}
P = 
\begin{bmatrix}
0 & -1 \\ 
1 & 0%
\end{bmatrix}
\text{, \;\;a rotation by } 90^{\circ}.
\end{equation}
\end{theorem}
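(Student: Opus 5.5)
The plan is to reduce the statement to a $2\times 2$ linear-algebra identity, using Lemma~\ref{lem:tangentgamma}. Write $M=(\partial_x\gamma)(r)$. By Lemma~\ref{lem:tangentgamma}(1), $T_r=\{v\in\mathbb{C}^2: v^TMv=0\}$. The proof of that lemma also shows that $M$ is the Hessian of $h(x,r)$ with respect to $x$ at $x=r$, so $M$ is symmetric. Since $r$ is an ordinary double point (Assumption~\ref{assumption}(\ref{lin:sing})), $M$ is nonsingular. Note that $(\partial_x P\gamma)(r)=PM$, because $P$ is constant. The two facts about $P$ that drive everything are:
\begin{itemize}
\item $P^T=-P$, so $v^TPv=0$ for every $v\in\mathbb{C}^2$;
\item $P^2=-I$.
\end{itemize}
The goal is to show that the nonzero elements of $T_r$ are exactly the eigenvectors of $PM$.

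For the inclusion ``eigenvectors $\subset T_r$'', let $PMv=\lambda v$ with $v\neq 0$. Multiplying by $P$ and using $P^2=-I$ gives $Mv=-\lambda Pv$. Hence
\[
v^TMv=-\lambda\, v^TPv=0,
\]
so $v\in T_r$.

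For the reverse inclusion, let $v\neq0$ with $v^TMv=0$. The key observation is that in $\mathbb{C}^2$ the set $\{w: v^Tw=0\}$, taken with respect to the bilinear (not Hermitian) pairing, is a one-dimensional subspace. It contains the nonzero vector $Pv$, because $v^TPv=0$ and $P$ is invertible. Therefore this subspace equals $\operatorname{span}(Pv)$. This step needs a little care over $\mathbb{C}$, since $v$ may be isotropic ($v^Tv=0$). The argument above does not use $v^Tv\neq0$, so it holds regardless. Since $v^T(Mv)=0$, we get $Mv=cPv$ for some $c\in\mathbb{C}$. Then $PMv=cP^2v=-cv$, so $v$ is an eigenvector of $PM$.

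Together the two inclusions give $T_r=\bigcup_\lambda \ker(PM-\lambda I)$, which is the claim. As a consistency check, $\operatorname{tr}(PM)=m_{21}-m_{12}=0$ by symmetry of $M$, and $\det(PM)=\det M\neq 0$. So the eigenvalues of $PM$ are $\pm\sqrt{-\det M}$, which are distinct. The eigenspaces are therefore exactly two lines, matching Lemma~\ref{lem:tangentgamma}(2). The only step needing real attention is the one-dimensionality of the annihilator of $v$ over $\mathbb{C}$; the rest is routine.
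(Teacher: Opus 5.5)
Your proof is correct and is essentially the paper's argument. With your $M=(\partial_x\gamma)(r)$, the paper writes $v^TMv=(Pv)^T(PM)v=\det[v,\ PMv]$, using $P^TP=I$ and $(Pv)^Tw=\det[v,\ w]$, and concludes that $v\in T_r$ if and only if $v$ and $PMv$ are linearly dependent; this is exactly your observation that the bilinear annihilator of $v$ is $\operatorname{span}(Pv)$. One cosmetic slip in your consistency check: $\operatorname{tr}(PM)=m_{12}-m_{21}$, not $m_{21}-m_{12}$, which is harmless since it vanishes.
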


\begin{proof}
Note 
\begin{align}
T_{r} &=\{v \in \mathbb{C}^{2}: v^T(\partial_{x}\gamma)(r)v=0\}
\;\;\;\;\;\;\;\;\;\;\;\; \;\;\;\;\;\;\;\;\;\;\;\;\;\;\;\;\;\;\, \text{ from
Lemma~\ref{lem:tangentgamma}}  \notag \\
&= \{v \in \mathbb{C}^{2}: v^T(P^{T}P)(\partial_{x}\gamma)(r)v=0\} \;\;
\;\;\;\;\;\;\;\;\;\;\;\;\;\;\;\;\;\; \text{ since $P$ is orthonormal}  \notag
\\
&= \{v \in \mathbb{C}^{2}: (Pv)^{T}(\partial_{x}P\gamma)(r)v=0\} \;\;\;\;
\;\;\;\;\;\;\;\;\;\;\;\;\;\;\;\;\;\; \text{ by rearranging}  \notag \\
&= \{v \in \mathbb{C}^{2}: (Pv)^{T}Mv=0\}
\;\;\;\;\;\;\;\;\;\;\;\;\;\;\;\;\;\;\;\;\;\;\;\;\;\;\;\;\;\;\;\;\;\; \text{
by using a shorthand } M = (\partial_{x}P\gamma)(r)  \notag \\
&= \{v \in \mathbb{C}^{2}: \det 
\begin{bmatrix}
v & Mv%
\end{bmatrix}%
=0 \} \;\;\;\;\;\;\;\;\;\;\;\; \;\;\;\;\;\;\;\;\;\;\;\;\;\;\;\;\;\;\text{
from }\; \forall w \; (Pv)^{T}w= \det 
\begin{bmatrix}
v & w%
\end{bmatrix}
\notag \\
&= \{v \in \mathbb{C}^{2}: v \text{ and } Mv \;\text{ are linearly dependent}
\} \;\;\; \text{ from a basic property of determinant}  \label{ld1} \\
&= \{ v \in \mathbb{C}^{2}: \exists_{\lambda} Mv = \lambda v \}
\;\;\;\;\;\;\;\;\;\;\;\;\;\;\;\;\;\;\;\; \;\;\;\;\;\;\;\;\;\;\;\;\;\;\;\;\;\;%
\text{ see below}  \label{ld2} \\
&= \text{union of eigenspaces of } (\partial_{x}P\gamma)(r).  \notag
\end{align}
We remark on the claimed equality between the two sets (\ref{ld1}) and (\ref%
{ld2}).  The inclusion (\ref{ld1}) $\supset$ (\ref{ld2}) is immediate from
the definition of linear dependence.  The inclusion (\ref{ld1}) $\subset$ (%
\ref{ld2}) is subtle but still follows from the special relationship between
the two vectors $v$ and $Mv$.
\end{proof}

\begin{example}[Running Example continued]
Let us consider 
computing the tangent cone of $C_{(0,0)}$ at $(0,0)$ 
using Theorem~\ref{thm:tangentcone}
for the running example (Example~\ref{runningExample}). 
Note 
\begin{align*}
(\partial_xP\gamma)(0,0) &= P(\partial_x\gamma)(0,0)
\;\;\;\;\;\;\;\;\;\;\;\;\;\;\;\text{ since } P \text{ is a constant matrix}
\\
&=%
\begin{bmatrix}
0 & -1 \\ 
1 & 0%
\end{bmatrix}
\begin{bmatrix}
2 & -2 \\ 
-2 & -2%
\end{bmatrix}%
\;\;\; \text{ from \ref{P} and by evaluating } (\partial_{x}\gamma)(0,0) \\
&= 
\begin{bmatrix}
2 & 2 \\ 
2 & -2%
\end{bmatrix}%
\end{align*}
One can directly find that the above matrix has the following eigenspaces: 
\begin{equation*}
\text{span}%
\begin{bmatrix}
1+\sqrt{2} \\ 
1%
\end{bmatrix}
\;\;\; \text{ and } \;\;\; \text{span}%
\begin{bmatrix}
1-\sqrt{2} \\ 
1%
\end{bmatrix}%
.
\end{equation*}
Thus, from Theorem~\ref{thm:tangentcone}, we conclude that the tangent cone $%
T_{(0,0)}$ is the union of these two eigenspaces. Note that this is
identical to the tangent cone computed in Example~\ref{ex:tangentconecalc}.
\end{example}

\section{Upper bound on the number of cells}
\label{sec:NumberConnected}

Finally, we consider Problem~\ref{prob:NumCells} regarding bounding the
number of connected components of $\mathbb{R}^2\setminus \overline{B}$. For
a given $f$, one could directly compute the number of cells exactly by
applying a known method such as~\cite{BabyGiantSemiAlg, Collins75,
SmoothConnectivity, Hong2010}. However, the aim here is to determine an
upper bound in terms of the degrees of the polynomials in $f$.
Since $\mathcal{H}$ in Theorem~\ref{thm:DefiningPolynomialsBdry} is defined
in terms of $S$, we first find an upper bound on $\# S$
and then derive bounds separately for 
when the degrees are equal 
and when the degrees are different.

\newpage

\subsection{Upper bound on number of singular points}

\begin{theorem}[Upper bound on number of singular points]
\label{thm:num_S} We have

\begin{description}
\item[C1:] If $d_{1}=d_{2}$, then\;\; $\# S \;\;\leq\;\; 3(d_1-1)^2$.~\footnote{Note that the case
when $d_1=d_2$ corresponds with the degree of the set of singular curves in $%
\mathbb{C}^2$ of degree $d_1$, which is a special case of a Boole's formula,
e.g., see \cite[Chap.~9]{GKZ}.
}
\item[C2:] If $d_{1}>d_{2}$, then\;\; $\#S \;\;\leq\;\; (d_1+d_2-1)^2 - d_1d_2$.
\end{description}
\end{theorem}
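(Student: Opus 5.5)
The plan is to bound $\#S$ by counting complex solutions of $\gamma$ with Bézout's theorem, after discarding two kinds of solutions that cannot lie in $S$. The first kind is the $d_1d_2$ points of $V_\mathbb{C}(f)$; the second, when $d_1=d_2$, is solutions at infinity.

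By Lemma~\ref{lemma:S}, $S=V_\mathbb{R}(\gamma)\setminus V_\mathbb{R}(f)\subset V_\mathbb{C}(\gamma)\setminus V_\mathbb{C}(f)$. Expanding,
\[
\gamma_i=\partial_{x_i}f_1\,f_2-\partial_{x_i}f_2\,f_1 ,
\]
so $\deg\gamma_i\le d_1+d_2-1$. I would homogenize $\gamma_1,\gamma_2$ and use projective Bézout: if $V(\gamma_1,\gamma_2)\subset\mathbb{P}^2$ is finite, the number of intersection points counted with multiplicity is at most $(d_1+d_2-1)^2$. Finiteness is where the genericity conditions of Assumption~\ref{assumption} enter. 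If the common locus were a curve, then Lemma~\ref{lem:polysforS} would place it inside $V(\det\partial_x f)$, and the singular Newton homotopy curves would form a one-parameter family. I expect Assumption~\ref{assumption}(\ref{lin:sing}), via the Lefschetz-pencil genericity, to exclude this.

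Every point of $V_\mathbb{C}(f)$ lies in $V_\mathbb{C}(\gamma)$, since both determinants vanish when $f=0$. By Assumption~\ref{assumption}(\ref{com-int}) there are exactly $d_1d_2$ such points, each contributing multiplicity at least $1$. Subtracting them from the Bézout count gives C2:
\[
\#S\le (d_1+d_2-1)^2-d_1d_2 .
\]

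For C1, with $d_1=d_2=d$, the extra saving of $2d-2$ must come from intersections at infinity; in the running example, $25-21=4=2\cdot3-2$. The highest-degree parts are $\gamma_i^*=\partial_{x_i}f_1^*\,f_2^*-\partial_{x_i}f_2^*\,f_1^*$, of degree $2d-1$. By Euler's relation $x_1\partial_{x_1}f_j^*+x_2\partial_{x_2}f_j^*=d\,f_j^*$,
\[
x_1\gamma_1^*+x_2\gamma_2^*=d\,f_1^*f_2^*-d\,f_2^*f_1^*=0 .
\]
Hence $\gamma_1^*=x_2g$ and $\gamma_2^*=-x_1g$ for some form $g$ of degree $2d-2$. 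Setting $x_0=0$, the two homogenized $\gamma_i$ therefore share the zeros of $g$ on the line at infinity. Counted with multiplicity, this is $2d-2$ points, provided $g\not\equiv0$, which I expect follows from Assumption~\ref{assumption}(\ref{smooth-homo}). Subtracting these from Bézout gives
\[
(2d-1)^2-(2d-2)-d^2=3(d-1)^2 .
\]

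The step I expect to be the main obstacle is making the infinity contribution rigorous. One must show that the intersection multiplicity of the two homogenized $\gamma_i$ at each point at infinity is at least the multiplicity of that point as a root of $g$, so that the full $2d-2$ is accounted for. My first attempt is a local computation: write $\gamma_i^h=\gamma_i^*+x_0\rho_i$, dehomogenize at a root of $g$, and take a suitable linear combination of the two equations in which $g$ factors out; this bounds the local intersection number from below. Finiteness of $V(\gamma)$ is the other delicate point, and I would rest it on the genericity assumptions rather than prove it in detail.
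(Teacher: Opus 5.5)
Your proposal is correct and is essentially the paper's proof. Both apply B\'ezout to $\gamma$, discard the $d_1d_2$ points of $V_\mathbb{C}(f)$, and, when $d_1=d_2$, use the Euler relation $x_1\gamma_1^*+x_2\gamma_2^*=0$ to extract $2(d_1-1)$ intersections at infinity; the paper additionally shows $\gamma$ has no zeros at infinity when $d_1>d_2$, which your argument correctly shows is unnecessary for the upper bound in C2.

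The local step you flag goes through. Near a root $P$ of $g$ with, say, $x_1(P)\neq 0$, the local ideal is generated by $x_1\gamma_1^h+x_2\gamma_2^h=x_0(x_1\rho_1+x_2\rho_2)$ and $\gamma_2^h$, so the local intersection number is at least $I_P(x_0,x_1g)=\mathrm{mult}_P(g)$. This is exactly the inequality the paper compresses into an appeal to the Fundamental Theorem of Algebra. One correction: $g\not\equiv 0$ follows from Assumption~\ref{assumption}(\ref{com-int}), which rules out $f_1^*=cf_2^*$, not from (\ref{smooth-homo}).
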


\begin{proof}
We begin with a few notations.
For a polynomial system $F$, let $\mathcal{N}_\mathbb{C}(F)$ denote the number
of solutions of $F$ in $\mathbb{C}^2$ counting multiplicity and $\mathcal{N}%
_\infty(F)$ denote the same at infinity.

From Lemma~\ref{lemma:S}, we have
$S \;= \; V_\mathbb{R}(\gamma)\setminus V_\mathbb{R}(f).$
Since $V_\mathbb{C}(f)\subset V_\mathbb{C}(\gamma)$ and 
$\# V_\mathbb{C}(\gamma) \leq \mathcal{N}_\mathbb{C}(\gamma)$, we~have
\begin{equation*}
\# S 
\;\;\;=\;\;\; \#V_{\mathbb{R}}(\gamma) - \#V_{\mathbb{R}}(f) 
\;\;\;\leq\;\;\; \#V_{\mathbb{C}}(\gamma) - \#V_{\mathbb{C}}(f)  
\;\;\;\leq\;\;\; \mathcal{N}_\mathbb{C}(\gamma) - \#V_{\mathbb{C}}(f).
\end{equation*}
From the genericity condition (Assumption~\ref{assumption}(\ref{com-int})), we have $\#V_{\mathbb{C}}(f)  = d_1d_2$. Thus,
\begin{equation*}
\# S \;\;\leq\;\; \mathcal{N}_\mathbb{C}(\gamma) -d_1d_2.
\end{equation*}
Since $\gamma$ consists of two polynomials of degree at most $d_1+d_2-1$, B\'ezout's
bound provides 
\begin{equation}
\# S \;\;\leq\;\;  (d_1+d_2-1)^2 - \mathcal{N}_\infty(\gamma) - d_1d_2.   \label{Sbound}
\end{equation}

It remains to estimate $\mathcal{N}_\infty(\gamma)$. To that
end, we compute the number of solutions in~$\mathbb{P}^{1}$ of the highest
degree homogeneous part of $\gamma$, denoted $\gamma^{\ast}$. Since $%
{}^{\ast}$ is a homomorphism, \eqref{eq:gamma}~yields 
\begin{equation*}
\gamma^{\ast}=\left[ 
\begin{array}{c}
\det\left[ 
\begin{array}{ll}
\partial_{x_{1}}f^{\ast} & f^{\ast}%
\end{array}
\right] \\ 
\det\left[ 
\begin{array}{ll}
\partial_{x_{2}}f^{\ast} & f^{\ast}%
\end{array}
\right]%
\end{array}
\right] . 
\end{equation*}
Euler's theorem on homogeneous functions provides that 
\begin{equation}  \label{eq:grelation}
x_{1}\gamma_{1}^{\ast} + x_{2}\gamma_{2}^{\ast} = (d_{1}-d_{2})
f_{1}^{\ast}f_{2}^{\ast}.
\end{equation}
We consider the two cases.

\begin{itemize}
\item Case: $d_{1}=d_{2}$. We claim that $\mathcal{N}_\infty(\gamma) = 2(d_1-1)$.
By~\eqref{eq:grelation}, $x_{1}\gamma_{1}^{\ast} = -x_{2}\gamma_{2}^{\ast}$ so that $%
x_{1}$ is a factor of $\gamma_2^{\ast}$. Hence, $\phi = \gamma_2^{\ast}/x_1$ is a
polynomial of degree $d_1+d_2-2=2(d_1-1)$ such that $\gamma_1^{\ast} = -x_2 \phi$
and $\gamma_2^{\ast} = x_1 \phi$. Therefore, by the Fundamental Theorem of
Algebra, $\mathcal{N}_\infty(\gamma) = 2(d_1-1)$. From the claim and \eqref{Sbound}, we finally have
\begin{equation*}
\# S \;\;\leq\;\; (d_1+d_2-1)^2  - 2(d_1-1) - d_1d_2 \;\;=\;\; (2d_1-1)^2  - 2(d_1-1) - d_1^2
\;\;=\;\; 3(d_1-1)^2.
\end{equation*}

\item Case: $d_{1} > d_{2}$. We claim that $\mathcal{N}_\infty(\gamma) = 0$.
We will prove it by contradiction. Assume that $\gamma$ has a solution at infinity. By~%
\eqref{eq:grelation}, either $f_{1}^{\ast}(x) = 0$ or $f_{2}^{\ast}(x)=0$,
but not both by genericity assumptions (Assumption~\ref{assumption}(\ref{com-int})) and any
such solution at infinity is nonsingular.
Assume that $f_{1}^{\ast}(x) = 0$ so that $f_{2}^{\ast}(x)\neq0$. Then, 
$\
0=\gamma^{\ast}(x) = f_{2}^{\ast}(x) \partial_x f_{1}^{\ast}(x) 
$
which is a contradiction since $f_{2}^{\ast}(x)\neq0$ and $\partial_x
f_{1}^{\ast}(x) \neq0$. Repeating a similar argument assuming that $%
f_{2}^{\ast}(x) = 0$ also yields a contradiction completing the proof.
From the claim and \eqref{Sbound}, we finally have
\begin{equation*}
\# S \;\;\leq\;\; (d_1+d_2-1)^2 - 0 - d_1d_2 \;\;=\;\; (d_1 +d_2-1)^2 - d_1d_2.
\end{equation*}
\end{itemize}
\end{proof}

\medskip

In the following, we will give four examples with degrees  $(2,1), (2,2), (3,1)$ and $(3,2)$ respectively
where the bounds in Theorem~\ref{thm:num_S} are exact. 
We leave it as an open problem to check whether for all values of $d_1\geq d_2$
with $d_1\geq 2$, there exists an example where the bounds are exact.

\begin{example}
\label{ex:21caseS} Consider the example: 
\begin{equation*}
\begin{split}
f_{1} & = x_1^2 + 3 x_1 x_2 - 5 x_2^2 + 9 x_1 - 2 x_2 + 40 \\
f_2 & = 5 x_1 - 2 x_2 + 1.
\end{split}%
\end{equation*}
\pichskip{0pt} 
\parpic[r][t]{
\begin{tikzpicture}[scale=0.15]
\draw  (-10,-6.46)     --  (10,10.3);
\draw  (-10,-11.11)     --  (10,5.668);
\draw  (-10,5.2)     --  (10,0.439);
\draw  (-9.95,-2.469)     --  (9.95,-7.2);
\draw (-4.2, -9.9) -- (3.7,9.93584);
\filldraw[red]  (-1.94,-4.37)   circle (6pt); 
\filldraw[red]  (0.85,2.63)     circle (6pt); 
\filldraw[blue] (-6.27,-3.34)   circle (6pt);
\filldraw[blue] (5.17,1.59)     circle (6pt); 
\end{tikzpicture}
}
\noindent Note that $d_{1}=2$ and $d_{2}=1$. From Theorem~\ref{thm:num_S}, we have 
\begin{equation*}
\#S \;\;\leq\;\; (d_1+d_2-1)^2-d_1d_2 \;=\; 2. 
\end{equation*}
Look at the figure on the right. The red dots are the real solutions of $f$
and the blue dots are elements of $S$. Note that there are \emph{two} blue dots,
that is, $\#S=2$. 
Thus, for this example, the upper bound for $\#S$ is \emph{exact}.
\end{example}

\begin{example}
\label{ex:22caseS} 
Consider the example (which was studied in \cite%
{Newton22case}).
\begin{align*}
f_{1} & =-11x_{1}^{2}+8x_{1}x_{2}-2x_{2}^{2}+63x_{1}-112 \\
f_{2} & = -8x_{1}^{2}+5x_{1}x_{2}-8x_{2}^{2}+54x_{1}+54x_{2}-196.
\end{align*}
\pichskip{0pt} 
\parpic[r][t]{
\begin{tikzpicture}[scale=0.18]
\draw  (-5,12)     --  (12,-5);
\draw  (2,12) --  (2,-6);
\draw  (12,11.33)   --  (-1,-6);
\draw  (12,8)--  (-6,-1);
\draw  (12,6.66)    --  (-6,3.66);
\draw  (12,-0.33)  --  (-6,5.66);
\filldraw[red]  (2,5)       circle (4pt);
\filldraw[red]  (8,6)       circle (4pt);
\filldraw[red]  (5,2)       circle (4pt);
\filldraw[red]  (2,3)       circle (4pt);
\filldraw[blue] (3.33,3.66) circle (4pt);
\filldraw[blue] (2,-2)      circle (4pt);
\filldraw[blue] (-2,4.33)   circle (4pt);
\end{tikzpicture}
}
\noindent Note that $d_{1}=2$ and $d_{2}=2$. From Theorem~\ref{thm:num_S}, we have 
\begin{equation*}
\#S \;\;\leq\;\; 3(d_1-1)^2 \;=\; 3. 
\end{equation*}
In the figure on the right, the red dots are the real solutions of $f$
and the blue dots are elements of $S$. Note that there are \emph{three} blue dots,
that is, $\#S=3$.
Thus, for this example, the upper bound for $\#S$ is \emph{exact}.
\end{example}

\begin{example}
\label{ex:31caseS}
Consider the example:  
\begin{equation*}
\begin{split}
f_{1} & =37x_{1}^{3} - 39x_{1}^{2}x_{2} - 27x_{1}x_{2}^{2} - 6x_{2}^{3} -
4x_{1}^{2} + 95 x_{1}x_{2} + 55x_{2}^{2} -68x_{1} +97x_{2} -64 \\
f_{2} & = 18x_{1} + 9x_{2} -94.
\end{split}%
\end{equation*}
\pichskip{0pt} 
\parpic[r][t]{
\includegraphics[width=0.25\textwidth]{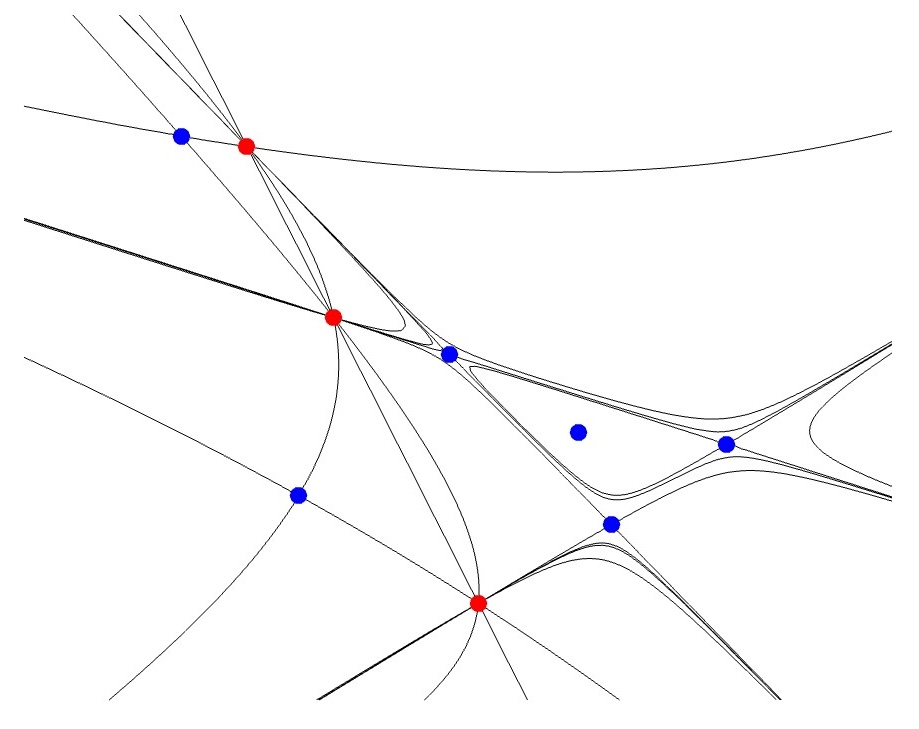}
}
\noindent Note that $d_{1}=3$ and $d_{2}=1$. 
From Theorem~\ref{thm:num_S}, we have 
\begin{equation*}
\#S \;\;\leq\;\; (d_1+d_2-1)^2-d_1d_2 \;=\; 6. 
\end{equation*}
Looking at the figure on the right, the red dots are the real solutions of $f$
and the blue dots are elements of $S$. Note that there are \emph{six} blue dots,
that is, $\#S=6$.
Thus, for this example, the upper bound for $\#S$ is \emph{exact}.
\end{example}


\begin{example}
\label{ex:32caseS}
Consider the example:
\begin{equation*}
\begin{split}
f_1&= 30x_1^2x_2-50x_1x_2^2+36x_1^2+49x_1x_2-2x_2^2+50x_1 -10x_2 +37\\
f_2&=23x_1^2+37x_1x_2+14x_2^2-28x_1-33x_2-8
\end{split}%
\end{equation*}
\pichskip{0pt} 
\parpic[r][t]{
\includegraphics[width=0.5\textwidth]{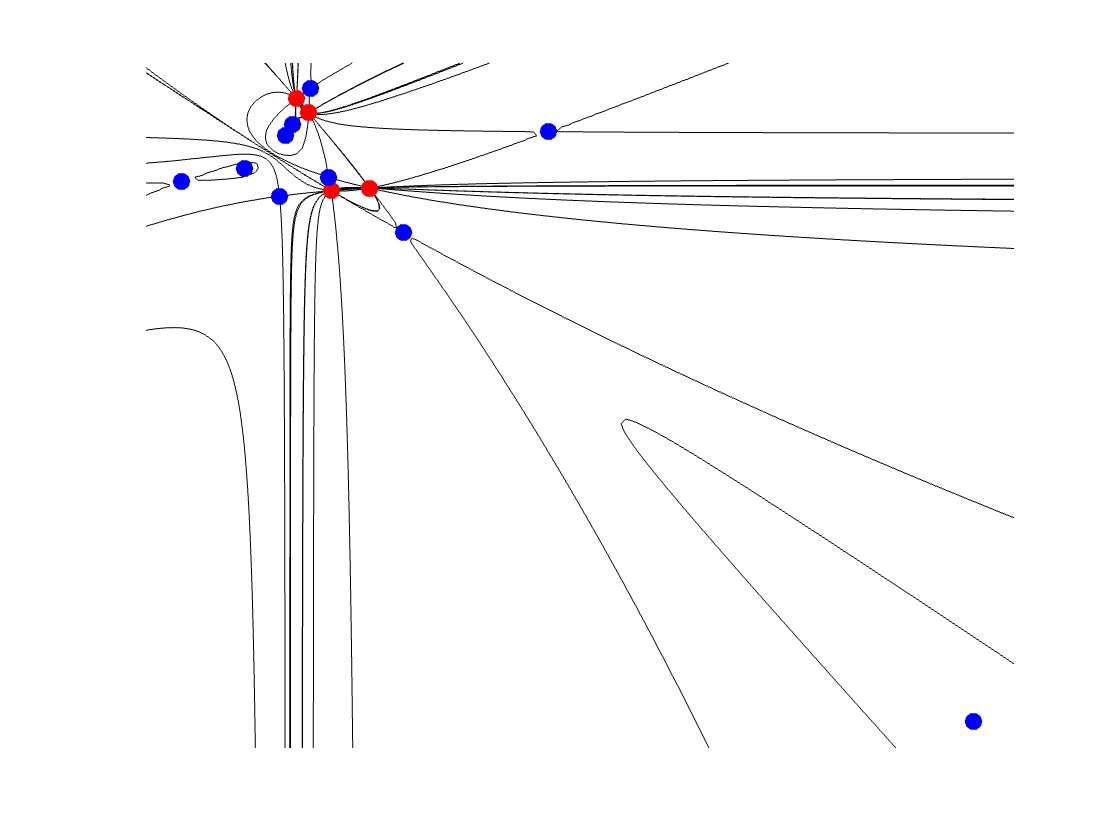}
}
\noindent 
Note that $d_1 = 3$ and $d_2 = 2$. From Theorem~\ref{thm:num_S}, we have
\begin{equation*}
\#S \;\;\leq\;\; (d_1+d_2-1)^2-d_1d_2 \;=\; 10. 
\end{equation*}
In the figure on the right, the red dots are the real solutions of~$f$
and the blue dots are elements of $S$. Note that there are \emph{ten} blue dots,
that is, $\#S=10$.
Thus, for this example, the upper bound for $\#S$ is \emph{exact}.
\end{example}

\vspace{1em}

\subsection{Upper bound on number of cells when degrees are equal}
\label{sec:DegreesEqual}

\begin{theorem}[Upper bound on number of cells when $d_1=d_2$ ]
\label{thm:num_comp_equal} When $d_1=d_2$, we have
\begin{equation}  \label{eq:Nfequal}
\#\mathrm{cells} \;\;\leq\;\; \#S(2d_{1}^2-1)-d_{1}^2+1 \;\;\leq\;\;
6d_1^4-12d_1^3+2d_1^2+6d_1-2.
\end{equation}
\end{theorem}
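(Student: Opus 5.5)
By Theorem~\ref{thm:DefiningPolynomialsBdry}(C1), $\overline{B}\subset V_\mathbb{R}(\mathcal{H})=\bigcup_{r\in S}C_r$. Every connected component of $\mathbb{R}^2\setminus\bigcup_{r\in S}C_r$ is contained in a single cell. Conversely, each cell is a union of such components together with pieces of $\bigcup_{r\in S} C_r\setminus\overline{B}$. I will argue, as is standard for arrangements, that the number of cells is at most the number of components of $\mathbb{R}^2\setminus\bigcup_{r\in S}C_r$. Removing only part of a curve arrangement (the Zariski-closed part $\overline{B}$, which is a union of some of the irreducible real pieces) cannot create more components than removing all of it.

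So it suffices to bound the number of regions of the arrangement of the $m=\#S$ curves $C_r$, $r\in S$. Each has degree $d_1$ by Assumption~\ref{assumption}(\ref{lin:deg}), since we may assume $f_2(r)\neq 0$ generically. I will add the curves one at a time.

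\textbf{Pairwise intersections.} The key structural fact is that for distinct $r,s\in S$ we have $C_r\cap C_s=V_\mathbb{R}(f)$. Indeed, if $x\in C_r\cap C_s$ with $f(x)\neq 0$, then $f(r)$ and $f(s)$ are both proportional to $f(x)$, so $s\in C_r$. The earlier proposition then gives $C_s=C_r$. This would force the two curves to share their unique singular point, so $r=s$. Hence all pairwise intersections lie in the fixed set $V_\mathbb{R}(f)$, which has at most $d_1^2$ points. Moreover, each $C_r$ is smooth at these points, since its singular point is not in $V_\mathbb{R}(f)$.

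\textbf{The count.} The plan is to show two things:
\begin{itemize}
\item[(i)] the complement of a single $C_r$ has at most $d_1^2$ components;
\item[(ii)] adding one more curve $C$ to an existing arrangement increases the number of regions by at most $2d_1^2-1$.
\end{itemize}
Then
\begin{equation*}
\#\mathrm{cells}\;\le\; d_1^2+(m-1)(2d_1^2-1)\;=\;m(2d_1^2-1)-d_1^2+1 .
\end{equation*}

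For (i), bound the components of $C_r$ using Harnack's inequality for compact ovals, plus at most $d_1$ unbounded branches (by smoothness at infinity, Assumption~\ref{assumption}(\ref{lin:inf})). Each component of a plane curve, including the node, adds at most one region. This gives roughly $\tfrac{(d_1-1)(d_1-2)}{2}+1+d_1+1\le d_1^2$ for $d_1\ge 2$, to be checked with care.

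For (ii), the new regions are at most the number of arcs into which $C$ is cut by the old arrangement. Each arc splits at most one region in two. The cut points are the at most $d_1^2$ points of $V_\mathbb{R}(f)$, through which $C$ passes smoothly. The number of arcs is then bounded by the number of cut points plus the number of components of $C$, plus the two extra arcs created at the node. The second step is to verify that this total is at most $2d_1^2-1$, using Harnack again. An alternative that avoids Harnack is to bound everything via an Euler-characteristic count on the projective compactification. There $C$ meets infinity in at most $d_1$ points, and $V_\mathbb{R}(h(x,r))$ has Betti number bounded by Milnor--Thom.

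\textbf{Final inequality.} Substitute $\#S\le 3(d_1-1)^2$ from Theorem~\ref{thm:num_S}(C1):
\begin{equation*}
3(d_1-1)^2(2d_1^2-1)-d_1^2+1\;=\;6d_1^4-12d_1^3+2d_1^2+6d_1-2 .
\end{equation*}

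\textbf{Main obstacle.} The main difficulty is the per-curve increment $2d_1^2-1$ in (ii), and the base count $d_1^2$ in (i). This needs a careful accounting of components versus cut points, including unbounded branches, the node, and the fact that the cut points are shared by all curves. The first thing I would try is the arc-counting Euler argument with Harnack's bound. If the constants do not close, I would switch to the Euler-characteristic count on the compactification, where the $d_1^2$ common points are vertices shared by all the curves.
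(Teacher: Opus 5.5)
\textbf{Verdict.} Your route differs from the paper's, and your reduction is sound, but the proposal has a genuine gap: the two counting claims (i) and (ii), which carry the theorem's content, are not proved, and the count you sketch for (ii) fails at $d_1=2$. The gap can be closed within your own framework, as described below.

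\textbf{What is correct.} Since $V_\mathbb{R}(\mathcal{H})=\bigcup_{r\in S}C_r$ is nowhere dense and contains $\overline{B}$, every (open) cell contains a region of this arrangement. Hence $\#\mathrm{cells}\le N$, the number of regions. Your observation that distinct $C_r,C_s$ ($r,s\in S$) meet exactly in $V_\mathbb{R}(f)$, where both are smooth, is correct and is the right structural input.

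\textbf{How the paper differs.} The paper never uses real topology. It bounds the cells by the routing points of $u=p/q^e$ and applies B\'ezout to the numerator of $\partial_x u$, which has degree $\#S\,d_1+1$. It then subtracts excess intersection at the $\#S$ nodes, at $V_\mathbb{C}(p,q)$, and at the $d_1^2$ base points, where $p$ vanishes to order $\#S$.

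\textbf{Where your count for (ii) fails.} It cannot give $2d_1^2-1$ when $d_1=2$, which is exactly where the bound is attained (the $18$-cell example). There every $C_r$ is a pair of real lines through the four base points. The second pair is cut into $8$ arcs (each line at two base points and at its node), yet it adds only $7$ regions ($4\to 11$). So ``new regions $\le$ number of arcs'' is off by one. The missing point is that the node is a \emph{new vertex} lying inside an old region. By Euler's formula the increase is (new edges) $-$ (new vertices) $+$ (new connected components), not just (new edges). For $d_1\ge 3$ your crude count happens to have enough slack, but as written it does not prove the stated bound.

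\textbf{How to close it.} The Euler-characteristic route you list as a fallback works cleanly. Let $K$ be the one-point compactification of $\bigcup_{r\in S}C_r$ in $S^2$; Alexander duality gives $N=1+b_0(K)-\chi(K)$. The real projective closures $C_r^{\mathbb{P}}\subset\mathbb{R}\mathbb{P}^2$ pairwise meet exactly in the $k=\#V_\mathbb{R}(f)\le d_1^2$ base points. They share no point at infinity, since such a point would be a common zero of $f_1^*$ and $f_2^*$. Hence adding a further curve $C_r$ changes $N$ by
\[
\Delta N \;=\; k+a_r-\chi(C_r^{\mathbb{P}})+\Delta b_0(K).
\]
Here $a_r\le d_1$ is the number of real points of $C_r^{\mathbb{P}}$ at infinity, and $\chi(C_r^{\mathbb{P}})$ is $-1$ for a crunode and $+1$ for an acnode. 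The term $\Delta b_0(K)$ is at most the number of components of $C_r^{\mathbb{P}}$ (counting an acnode) that meet neither $V_\mathbb{R}(f)$ nor the line at infinity. Harnack's bound for the normalization controls that number.

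This gives $\Delta N\le 2d_1^2-1$, with equality for two real lines when $d_1=2$ and room to spare when $d_1\ge 3$. The same count gives $N\le d_1^2$ for the first curve. So your argument then proves the theorem, and in fact a sharper bound when $d_1\ge3$. What the paper's count buys instead is a purely algebraic argument. It is sharp as a count of complex critical points and adapts directly to $d_1>d_2$.

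\textbf{A caveat shared with the paper.} Both arguments tacitly need $\#S\ge 1$. If $S=\emptyset$ there is exactly one cell, while the right-hand side equals $1-d_1^2<0$.
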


\begin{proof}
Let $d_1 = d_2$.
The upper bound in \eqref{eq:Nfequal}
arises from a bound on the number of routing 
points~\cite{SmoothConnectivity,Hong2010}, which
provides an upper bound on the number of cells since each cell 
must contain at
least one routing point~\cite[Prop.~3.2]{SmoothConnectivity}.

Let
\begin{eqnarray} 
\mathcal{H}(x) &=& \prod_{r\in S}h(x,r)  \text{\;\;\;\;  as in Theorem~\ref%
{thm:DefiningPolynomialsBdry}} \\
\rho &=& \prod_{r\in S} f_2(r)  \label{eq:rho} \\
p(x) &=& \frac{\mathcal{H}(x)}{\rho} = \prod_{r\in S} \left( f_1(x) - \frac{%
f_1(r)}{f_2(r)} f_2(x)\right).  \label{eq:PolyP}
\end{eqnarray}
\noindent {\sf Claim}: $p(x)$ is well-defined. 
It suffices to show that $\rho \neq 0$.
Let $r\in V_\mathbb{R}(f_2)$ be arbitrary.
It suffices to show that $r \notin S$.
\begin{itemize}
\item  When $r \in V_\mathbb{R}(f_1)$:
     From~(\ref{def:S}), we have $r  \notin S$.
\item When $r \notin V_\mathbb{R}(f_1)$:
      From Proposition~\ref{prp:definingPolyCr}, we immediately have  $C_r = V_\mathbb{R}(f_2)$.
From the genericity assumption (Assumption~\ref{assumption}(\ref{smooth})), we have that $V_\mathbb{C}(f_2)$ is smooth.
Hence $C_r$ is smooth. Thus $r \notin S$.
\end{itemize}
The claim has been proved.

\bigskip

\noindent 
It is obvious that $\deg p \;\leq\; (\# S) d_1$. 
Due to the genericity assumption (Assumption~\ref{assumption}(\ref{lin:deg})), 
we~have 
\begin{equation}
\deg p = (\# S) d_1.
\end{equation}
Let  
\begin{equation}
e = \lceil (\deg p + 1)/2 \rceil.  \label{eq:e}
\end{equation} 
Note that $2e > \deg p$. 
For a general $c\in\mathbb{R}^2$, we consider the routing
function 
\begin{equation*}
u(x) = \frac{p(x)}{q(x)^e}
\end{equation*}
where  
\begin{equation}
    q(x) = 1 + (x_1-c_1)^2 + (x_2-c_2)^2.  \label{eq:PolyQ}
\end{equation} 
The set of routing points is $V_\mathbb{R}(\partial_x u)\setminus V_\mathbb{R%
}(p)$ where 
\begin{equation}  \label{eq:dxU}
\partial_x u = \frac{\partial_x p}{q^{e}} - e \frac{(\partial_x q) p}{q^{e+1}%
} = \frac{(\partial_x p) q-e (\partial_x q)p}{q^{e+1}}=0.
\end{equation}
Since the numerator of $\partial_x u$ has degree $%
(\#S) d_1+1$, B\'ezout's bound provides that the number of routing points is
at most $((\# S)d_1+1)^2$. The following sharpens this bound by identifying
several cases that can not yield routing points.

\begin{enumerate} [align=parleft, leftmargin=5em,  labelwidth=3em]

\item[$S$:] For every $r\in S$, $r$ is a singular point of $h(x,r)$ and thus
the numerator of $\partial_x u$ will vanish at~$r$. Hence, we can reduce the
bound by $\# S$.

\item[$p,q$:] Since $q$ is the denominator of $\partial_x u$ and $V_\mathbb{R%
}(q) = \emptyset$, we can ignore any points arising from the vanishing of $q$%
. Since the simultaneous vanishing of $p$ and $q$ contributes $2(\#S)d_1$,
we can reduce the bound by $2(\#S)d_1$.

\item[$V_\mathbb{C}(f)$:] For every $r\in S$, we know that $h(x,r)$ vanishes
on $V_\mathbb{C}(f)$, which consists of $d_1^2$ distinct points by
genericity assumptions (Assumption~\ref{assumption}(\ref{com-int})). Since $p$ arises as a
product of $\# S$ bivariate polynomials, each point in $V_\mathbb{C}(f)$ has
multiplicity $(\#S-1)^{2}$ with respect to the numerator of $\partial_x u$.
Thus, we can reduce the bound by $d_{1}^2(\#S-1)^{2}$.
\end{enumerate}

\noindent Since each of the cases above are disjoint, the number of
routing points is at most 
\begin{equation*}
((\# S)d_1+1)^2 - \# S - 2(\# S) d_1 - d_1^2(\# S - 1)^2 = \# S(2d_1^2-1) -
d_1^2 + 1. 
\end{equation*}
The last inequality in \eqref{eq:Nfequal} follows from $\# S \leq 3(d_1-1)^2$
by Theorem~\ref{thm:num_S}.
\end{proof}

\medskip

In the following, we give an example where the bound in Theorem~\ref{thm:num_comp_equal} is exact.
\begin{example}
Consider the example (which was studied in \cite{Newton22case} and also in Example~\ref{ex:22caseS} ):
\begin{align*}
f_{1} & =-11x_{1}^{2}+8x_{1}x_{2}-2x_{2}^{2}+63x_{1}-112 \\
f_{2} & = -8x_{1}^{2}+5x_{1}x_{2}-8x_{2}^{2}+54x_{1}+54x_{2}-196.
\end{align*}
\pichskip{0pt} 
\parpic[r][t]{
\begin{tikzpicture}[scale=0.2]
\tikzstyle{every node}=[font=\tiny]
\draw  (-5,12)     --  (12,-5);
\draw  (2,12) --  (2,-6);
\draw  (12,11.33)   --  (-1,-6);
\draw  (12,8)--  (-6,-1);
\draw  (12,6.66)    --  (-6,3.66);
\draw  (12,-0.33)  --  (-6,5.66);
\filldraw[red]  (2,5)       circle (4pt);
\filldraw[red]  (8,6)       circle (4pt);
\filldraw[red]  (5,2)       circle (4pt);
\filldraw[red]  (2,3)       circle (4pt);
\filldraw[blue] (3.33,3.66) circle (4pt);
\filldraw[blue] (2,-2)      circle (4pt);
\filldraw[blue] (-2,4.33)   circle (4pt);
\node (1) at (3.44,2.89) {$1$};
\node (2) at (2.44,3.89) {$2$};
\node (3) at (0.67,4.11) {$3$};
\node (4) at (-4.67,4.56) {$4$};
\node (5) at (-3.14,7.47) {$5$};
\node (6) at (-0.33,9.67) {$6$};
\node (7) at (6.08,9.06) {$7$};
\node (8) at (4.44,4.89) {$8$};
\node (9) at (10.67,8.44) {$9$};
\node (10) at (5.44,3.89) {$10$};
\node (11) at (10.67,6.89) {$11$};
\node (12) at (9.37,3.25) {$12$};
\node (13) at (9.67,-1.11) {$13$};
\node (14) at (5.61,-3.31) {$14$};
\node (15) at (3,1) {$15$};
\node (16) at (1,-4.67) {$16$};
\node (17) at (-1.93,-2.13) {$17$};
\node (18) at (-3.1,2.2) {$18$};
\end{tikzpicture}
}
\noindent Note that $d_1=2$ and $d_2 = 2$. 
From Theorem~\ref{thm:num_comp_equal}, we have
\[
   \# \text{cells} \;\;\leq\;\;  6d_{1}^{4}-12d_{1}^{3}+2d_{1}^{2}+6d_{1}-2 \;=\; 18.
\]
Consider the figure on the right. The red dots are the real solutions of $f$ and
the blue dots are elements of $S$. Note that there are 18 cells.
Thus, for this example, the upper bound for $\#$ cells  is \emph{exact}.
\end{example}

\subsection{Upper bound on number of cells when degrees are different}

\label{sec:DegreesNotEqual}
Before we state the main result, we first briefly go over the overall strategy. 
Let $p$ be as in \eqref{eq:PolyP};  we consider $p_2 = p\cdot f_2$ as one
would expect from Theorem~\ref{thm:DefiningPolynomialsBdry}. The proof of
Theorem~\ref{thm:num_comp_equal} identified three cases that are not routing
points. With appropriate modification, these extend to the case when $d_1 >
d_2$. However, there is one additional type of points to remove: solutions
at infinity. The reason is that for every $r$ with $f_2(r)\neq 0$, $h(x,r)$
now defines the same~$d_1$ points at infinity. In fact, the terms of degree 
$d_1, d_1-1, \dots, d_1-d_2+1$ are the same, i.e., the top $d_1-d_2$
degrees. Therefore, multiplying $\# S$ polynomials together with the same $%
d_1-d_2$ leading degrees yields a scheme at infinity that can be ignored
when bounding the number of cells.

\begin{definition}
\label{def:inf_complete} 
We say that $f$ is \textit{infinity complete} if
there exists $c\in\mathbb{R}^2$ such that 
\begin{equation*}
(\partial_x p_2) q - e (\partial_x q) p_2=0 
\end{equation*}
has only finitely many solutions and the scheme at infinity has degree $%
\#S(\#S-1)d_{1}(d_{1}-d_{2})$, 
where  $p, q$ and $e$ are as defined in (\ref{eq:PolyP}),(\ref{eq:PolyQ}) and (\ref{eq:e}), respectively.
\end{definition}

\begin{remark}
When $d_{1}=d_{2}$, the systems $f$ under consideration are infinity
complete with no corresponding solutions at infinity due to the genericity
assumptions (Assumption~\ref{assumption}).
\end{remark}

Note that infinity completeness is an open condition on the system $f$. We
selected random cases with $1\leq d_2 < d_1 \leq 3$ with $\# S \leq 4$ and
all were verified to be infinity complete using \texttt{Macaulay2}~\cite%
{Macaulay2} and \texttt{Bertini}~\cite{BHSW06}. This suggests the following.

\begin{conjecture}
\label{conj:Infinity} Almost all $f$'s are infinity complete.
\end{conjecture}

Now we are ready to state the main result.

\begin{theorem}[Upper bound on number of cells when $d_1 >d_2$ ]
\label{thm:num_comp} When $d_1 > d_2$, we have
\begin{equation*}
\# \mathrm{cells} \;\;\leq\;\; (\#S)^{2} d_{1}(d_{1}-d_{2})+\#S
(2d_{1}d_{2}-1)+d_2^2+1 
\end{equation*}
and, if $f$ is infinity complete, 
\begin{equation}  \label{eq:BoundDiffDeg}
\# \mathrm{cells} \;\;\leq\;\; \#S (d_{1}^{2}+d_{1}d_{2}-1)+d_2^2+1
\;\;\leq\;\; (d_1+d_2)(d_1^3 + d_1^2 d_2 - 2d_1^2 + d_1d_2^2 - 2d_1d_2 + 2).
\end{equation}
\end{theorem}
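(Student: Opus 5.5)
We follow the routing-point strategy of the proof of Theorem~\ref{thm:num_comp_equal}, now applied to $p_2 = p\cdot f_2$, where $p$ is as in \eqref{eq:PolyP}. By Theorem~\ref{thm:DefiningPolynomialsBdry}(C2), $\overline{B}\subset V_\mathbb{R}(p_2)$. Hence every cell is a union of connected components of $\mathbb{R}^2\setminus V_\mathbb{R}(p_2)$, and it suffices to bound the number of routing points of $u = p_2/q^e$, where $q$ is as in \eqref{eq:PolyQ} and $e=\lceil(\deg p_2+1)/2\rceil$.

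First we check that $p$ is well defined ($\rho\neq 0$); the argument from the equal-degree case carries over verbatim. By Assumption~\ref{assumption}(\ref{lin:deg}) we have $\deg p = (\#S)d_1$, so $\deg p_2 = (\#S)d_1+d_2$. The numerator $(\partial_x p_2)q - e(\partial_x q)p_2$ then consists of two polynomials of degree $(\#S)d_1+d_2+1$, and B\'ezout gives at most $((\#S)d_1+d_2+1)^2$ points. We subtract the non-routing solutions as before.

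\emph{$S$:} each $r\in S$ is a singular point of its factor, so the numerator vanishes there; subtract $\#S$.

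\emph{$p_2,q$:} the common zeros of $p_2$ and $q$ have no real points and contribute $2\deg p_2 = 2((\#S)d_1+d_2)$; subtract this.

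\emph{$V_\mathbb{C}(f)$:} now $p_2$ is a product of $\#S+1$ curves, namely the $h(x,r)$ for $r\in S$ together with $f_2$, all passing through the $d_1d_2$ points of $V_\mathbb{C}(f)$. Each such point is an ordinary $(\#S+1)$-fold point of $p_2$, so it has multiplicity $(\#S)^2$ in $V(\partial_x p_2\cdot q - e p_2\partial_x q)$; subtract $d_1d_2(\#S)^2$.

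Expanding,
\[
((\#S)d_1+d_2+1)^2-\#S-2(\#S)d_1-2d_2-d_1d_2(\#S)^2
=(\#S)^2d_1(d_1-d_2)+\#S(2d_1d_2-1)+d_2^2+1,
\]
which is the first claimed bound.

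If $f$ is infinity complete (Definition~\ref{def:inf_complete}), the numerator additionally has a scheme at infinity of degree $\#S(\#S-1)d_1(d_1-d_2)$. Because the system has only finitely many solutions, B\'ezout counts these projective solutions too, so we may subtract this amount. The remaining quantity is
\[
(\#S)^2d_1(d_1-d_2)-\#S(\#S-1)d_1(d_1-d_2)=\#S\,d_1(d_1-d_2),
\]
and adding the other terms gives $\#S(d_1^2+d_1d_2-1)+d_2^2+1$. The final inequality in \eqref{eq:BoundDiffDeg} comes from substituting $\#S\le (d_1+d_2-1)^2-d_1d_2$ from Theorem~\ref{thm:num_S}(C2) and factoring, which is routine algebra.

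The main obstacle is justifying that these removed contributions are disjoint and have exactly the stated multiplicities. The key case is the $V_\mathbb{C}(f)$ count, where $f_2$ is now one of the branches. For that we would use Assumption~\ref{assumption}(\ref{com-int}) and (\ref{lin:sing}): distinct pencil members meet transversally at the base points, and no $r\in S$ lies in $V(f)$ or $V(q)$. The multiplicity $(\#S)^2$ at an ordinary $m$-fold point follows from the polar/gradient count $(m-1)^2$ with $m=\#S+1$. Points of $S$ are disjoint from the other two sets because $f(r)\neq0$ and $q>0$ on $\mathbb{R}^2$.
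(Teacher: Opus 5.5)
Your proposal is correct and follows essentially the same route as the paper. Both arguments take the B\'ezout count $((\#S)d_1+d_2+1)^2$ for the routing-point system of $p_2=p\cdot f_2$, subtract $\#S$, $2((\#S)d_1+d_2)$ and $d_1d_2(\#S)^2$, subtract a further $\#S(\#S-1)d_1(d_1-d_2)$ under infinity completeness, and finish with Theorem~\ref{thm:num_S}(C2). Your justification of the $(\#S)^2$ multiplicity is more explicit than the paper's: you argue that the $h(x,r)$ and $f_2$ are distinct pencil members, so each point of $V_\mathbb{C}(f)$ is an ordinary $(\#S+1)$-fold point of $p_2$.

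One wording fix: a cell need not be a union of components of $\mathbb{R}^2\setminus V_\mathbb{R}(p_2)$, since it may contain pieces of $V_\mathbb{R}(p_2)\setminus\overline{B}$. However, each cell contains at least one such component because $V_\mathbb{R}(p_2)$ has empty interior, and that is all the count needs.
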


\begin{proof}
Similar to the proof of Theorem~\ref{thm:num_comp_equal}, B\'ezout's theorem
yields a bound of $((\# S)d_1 + d_2 + 1)^2$ which is reduced by the same
three cases. The first is the same, removing $\# S$ points. The second
removes $2 ((\#S) d_1 + d_2)$ points. The third becomes $d_1 d_2 (\#S)^2$
since $f_2$ also vanishes at every point in~$V_\mathbb{C}(f)$. Subtracting
these off yields the first inequality. The second inequality arises from
subtracting an additional $\# S (\# S -1) d_1 (d_1-d_2)$ and utilizing the
bound on $\#S$ from~Theorem~\ref{thm:num_S}.
\end{proof}

In the following, we give an example where the bound in Theorem~\ref{thm:num_comp} is exact.
\begin{example}
Consider the example (which was studied in Example~\ref{ex:21caseS}):
\begin{equation*}
\begin{split}
f_{1} & = x_1^2 + 3 x_1 x_2 - 5 x_2^2 + 9 x_1 - 2 x_2 + 40 \\
f_2 & = 5 x_1 - 2 x_2 + 1.
\end{split}%
\end{equation*}
\pichskip{0pt} 
\parpic[r][t]{
\begin{tikzpicture}[scale=0.15]
\tikzstyle{every node}=[font=\tiny]
\draw  (-10,-6.46)     --  (10,10.3);
\draw  (-10,-11.11)     --  (10,5.668);
\draw  (-10,5.2)     --  (10,0.439);
\draw  (-9.95,-2.469)     --  (9.95,-7.2);
\draw (-4.2, -9.9) -- (3.7,9.93584);
\filldraw[red]  (-1.94,-4.37)       circle (6pt);
\filldraw[red]  (0.85,2.63)       circle (6pt);
\filldraw[blue] (-6.27,-3.34)      circle (6pt);
\filldraw[blue] (5.17,1.59)      circle (6pt);
\node (1) at (-9,-4) {$1$};
\node (2) at (-6,1) {$2$};
\node (3) at (0,6) {$3$};
\node (4) at (3,6) {$4$};
\node (5) at (-6,-5) {$5$};
\node (6) at (-2,-1) {$6$};
\node (7) at (2,1) {$7$};
\node (8) at (6,5) {$8$};
\node (9) at (-4,-7) {$9$};
\node (10) at (0,-7) {$10$};
\node (11) at (6,-2) {$11$};
\node (12) at (9,3) {$12$};
\end{tikzpicture}
}
\noindent Note that $d_1=2$ and $d_2 = 1$. 
It turns out that $f$ is infinity complete.
From Theorem~\ref{thm:num_comp}, we have
\[
   \# \text{cells} \;\;\leq\;\; (d_{1}+d_{2})(d_{1}^{3}+d_{1}^{2}d_{2}-2d_{1}^{2}+d_{1}d_{2}^{2}-2d_{1}d_{2}+2)
    \;=\; 12.
\]
In the figure on the right, the red dots are the real solutions of $f$ and
the blue dots are elements of $S$. Note that there are 12 cells.
Thus, for this example, the upper bound for $\#$cells  is \emph{exact}.
\end{example}

\medskip

In the following, we will give two examples where the bound in Theorem~\ref{thm:num_comp} is not exact over the real numbers, but \textit{is} exact over the complex numbers in terms of counting critical points.
\begin{example}
Consider the example:
\begin{equation*}
\begin{split}
f_1 & =- 19x_1^3 - 37x_1^2x_2 + 43x_1x_2^2 + 46x_2^3 +49x_1^2 - 28x_1x_2 -
13x_2^2 + 22x_1 - 48x_2 + 10 \\
f_2 & = - 15x_1 - 6x_2 - 36
\end{split}%
\end{equation*}
\noindent Note that $d_1=3$ and $d_2 = 1$. 
It turns out that $f$ is infinity complete.
From Theorem~\ref{thm:num_comp}, we have
\[
   \# \text{cells} \;\;\leq\;\; (d_{1}+d_{2})(d_{1}^{3}+d_{1}^{2}d_{2}-2d_{1}^{2}+d_{1}d_{2}^{2}-2d_{1}d_{2}+2)
    \;=\; 68.
\]
A calculation with \texttt{Bertini}~\cite{BHSW06} 
confirms that there is exactly $68$ critical points over the complex numbers. 
In particular, of the 68 points, 46 are real (and thus routing points)
and the other 22 arise in complex conjugate~pairs.
\end{example}

\begin{example}
Consider the example:
\begin{equation*}
\begin{split}
f_{1} & = 30x_1^2x_2 - 50x_1x_2^2 + 36x_1^2 + 49x_1x_2 - 2x_2^2 + 50x_1 -
10x_2 + 37 \\
f_{2} & = 23x_1^2 + 37x_1x_2 + 14x_2^2 - 28x_1- 33x_2 - 8
\end{split}%
\end{equation*}
\noindent Note that $d_1=3$ and $d_2 = 2$. 
It turns out that $f$ is infinity complete.
From Theorem~\ref{thm:num_comp}, we have
\[
   \# \text{cells} \;\;\leq\;\; (d_{1}+d_{2})(d_{1}^{3}+d_{1}^{2}d_{2}-2d_{1}^{2}+d_{1}d_{2}^{2}-2d_{1}d_{2}+2)
    \;=\; 145.
\]
A calculation with \texttt{Bertini}~\cite{BHSW06} confirms 
that there is exactly $145$ critical points over the
complex numbers.
In particular, of the 145 points,
99 are real (and thus routing points) 
and the other 46 arise in complex conjugate pairs.
\end{example}

\section{Conclusion}

\label{sec:Conclusion} Newton homotopies are an approach to compute real
solutions to real polynomial systems. The set of real solutions obtained by
tracking from a start point depends upon which cell the start point is
selected from, with the boundaries of the cells being of particular interest
for analyzing the behavior of Newton homotopies. By focusing on the
bivariate case, we found a geometric description of the boundary,  an algebraic characterization of the
Zariski closure of the boundary, and characterized the local structure of the
boundary at singularities by looking at the tangent cone. From a bound on
the number of singular Newton homotopy curves, we derived an upper bound on
the number of cells in terms of the degrees of the polynomials and
demonstrated several examples where the bounds are exact. 
Future work involves extending to an arbitrary number of variables.

\bibliographystyle{abbrv}
\bibliography{references}

\end{document}